\documentclass[11pt,a4paper]{article}

\usepackage[T1]{fontenc}
\usepackage[utf8]{inputenc}
\usepackage{lmodern}
\usepackage{microtype}
\usepackage{amsmath,amssymb,amsfonts,amsthm,mathtools}
\usepackage{bm}
\usepackage{booktabs,subcaption}
\usepackage{enumitem}
\usepackage{hyperref}
\usepackage{cite}
\usepackage[capitalise,nameinlink]{cleveref}
\usepackage{xcolor}

\usepackage[a4paper,margin=3cm]{geometry}
\setlist[itemize]{leftmargin=2em}

\usepackage[mathlines]{lineno}
\usepackage{placeins}

\newtheorem{theorem}{Theorem}[section]
\newtheorem{proposition}[theorem]{Proposition}
\newtheorem{lemma}[theorem]{Lemma}
\newtheorem{corollary}[theorem]{Corollary}
\theoremstyle{definition}

\newtheoremstyle{remarkbold}
{}% Space above
{}% Space below
{\itshape}% Emphasized text (\itshape)
{}% Indentation
{\bfseries}% Style of title (\bfseries)
{.}% Full stop in the title
{ }% Space in the title
{}% Specifica del titolo (opzionale)

\theoremstyle{remarkbold}
\newtheorem{remark}[theorem]{Remark}

\title{Ensemble Kalman Inversion as an Inertial Interacting Particle System}
\author{
	Michael Herty \\
	\small \it{Institute for Geometry and Applied Mathematics, RWTH Aachen University} \\ \small \it{and} \\ \small \it{Department of Mathematics and Applied Mathematics, University of Pretoria}
	\and
	Pierpaolo Porretta \\
	\small \it{Department of Mathematics, Sapienza University of Rome}
	\and
	Giuseppe Visconti \\
	\small \it{Department of Mathematics, Sapienza University of Rome}
}
\date{\today}

\begin{document}	
	
\maketitle

\begin{abstract}
	{\color{black}
		Ensemble Kalman Inversion (EKI) is a derivative-free, ensemble-based method	for inverse and optimization problems. A central limitation of its continuous-time dynamics is premature covariance collapse, which can restrict the directions accessible to the inversion and increase sensitivity to the	initial ensemble.
		
		We introduce a second-order interacting particle system that combines the Kalman force with inertia, damping, attraction towards the ensemble mean, and	short-range repulsion. We establish a generalized subspace property showing that nonzero initial velocities may enlarge the affine space accessible to the dynamics, while a finite-ensemble restriction remains. For linear inverse
		problems, we derive the mean and fluctuation dynamics, identify a regime in	which fully collapsed configurations are linearly unstable, characterize asymptotic optimality on the subspace retained by the limiting covariance, and prove exponential decay for the associated frozen-covariance mean dynamics.
		
		Numerical experiments on high-dimensional linear and nonlinear inverse problems investigate subspace enlargement, discrepancy-based regularization, and ensemble collapse.
		}
\end{abstract}

\paragraph{Keywords.}
	Ensemble Kalman inversion;
	second-order particle methods;
	inverse problems;
	derivative-free optimization;
	interacting particle systems.

\paragraph{MSC2020.} Primary 65J22; Secondary 65C35, 37N40, 34D20, 65K10.

\section{Introduction}
\label{sec:introduction}

Inverse problems arise in many areas of applied mathematics, science and
engineering, where unknown parameters or states must be recovered from
indirect and typically noisy observations. Their stable solution generally
requires regularization; see, for instance,
\cite{EnglHankeNeubauer1996,Hansen1998,Stuart2010,dashtistuart2017}.
In the Bayesian framework, the solution is instead described by a posterior
distribution combining data and prior information
\cite{Stuart2010,dashtistuart2017}.

Ensemble Kalman methods provide a derivative-free framework for data
assimilation and inverse problems. Starting from the Ensemble Kalman Filter
\cite{Evensen1994}, ensemble-based Kalman methods have been applied in
geophysics, reservoir engineering, and uncertainty quantification
\cite{Aanonsen2009,LawStuart2012,iglesiaslawstuart2013b}. Ensemble Kalman
Inversion (EKI) was introduced for inverse problems in
\cite{iglesiaslawstuart2013}. Its key feature is the use of empirical
covariances to define the update, avoiding explicit derivatives of the forward
map and making the method attractive for large-scale or black-box problems.
Convergence and regularization properties have been studied in linear and
noisy settings
\cite{schillingsstuart2017,schillingsstuart2018,SchillingsPreprint}, together
with Tikhonov regularization \cite{ChadaStuartTong2019}, constrained and
regularized variants
\cite{ChadaShillingsWeissmann2019,Stuart2019CEnKF,HertyVisconti2020,
	ZhangStroferXiao2019}, hierarchical approaches \cite{ChadaIglesiasRoininenStuart2018}, machine
learning applications \cite{KovachkiStuart2019,Haber2018NeverLB}, and
mean-field and continuous-time formulations
\cite{lawtembinetempone2016,DingLi2019,DingLiLu,BungertWacker2023,
	HertyVisconti2019,CarrilloVaes}.

From a dynamical systems viewpoint, continuous-time EKI can be interpreted as
an interacting particle system driven by a covariance-preconditioned descent
direction. In the linear setting, the empirical covariance adapts the
dynamics to the ensemble geometry, but it may also collapse prematurely.
Rank loss then restricts the directions available to the inversion and may
lead to stationary configurations that do not correspond to the desired
solution. Stabilization mechanisms based on covariance inflation and
relaxation have therefore been proposed
\cite{ArmbrusterHertyVisconti2022}.

In parallel, acceleration mechanisms for EKI have been inspired by classical
inertial optimization methods such as Polyak's heavy-ball method
\cite{Polyak1964} and Nesterov acceleration
\cite{Nesterov1983,SuBoydCandes2016}. Nesterov-type acceleration has recently
been incorporated into EKI through a particle-level nudging step
\cite{VernonBachDunbar2025}. The particle interpretation of EKI also connects
with interacting particle methods for optimization, including Particle Swarm
Optimization and Consensus-Based Optimization
\cite{EberhartKennedy1995,ShiEberhart1998,WangTanLiu2017,
	PinnauTotzecTseMartin2016,CarrilloJinLIZhu2020}, as well as related
mean-field, memory, polarization, and micro--macro developments
\cite{GrassiPareschi2021,BorghiGrassiPareschi2023,
	BungertRoithWacker2025,BungertHoffmannKimRoith2025,HertyVeneruso2025}.

{\color{black}
In this paper we introduce an inertial Ensemble Kalman particle system for
inverse and optimization problems. Our aim is not simply to accelerate the
minimization of the data-misfit functional, but to modify the internal
ensemble dynamics so as to counteract premature contraction and to alter the
geometry of the directions accessible to the inversion. The method extends
continuous-time EKI by assigning positions and velocities to the particles
and by coupling the Kalman-type force with damping, attraction towards the
ensemble mean, and short-range repulsion. Unlike Nesterov-type accelerations,
which modify the discrete ensemble update, the proposed model is formulated
directly as a continuous-time second-order interacting particle system.

A first structural result is a generalized subspace property: the
second-order trajectories remain in the affine space generated jointly by the
initial position anomalies and initial velocities. Thus nonzero initial
velocities may enlarge the accessible space compared with standard EKI,
although the finite-ensemble subspace restriction is not removed. For linear
inverse problems, we derive the mean and fluctuation dynamics and show that
fully collapsed configurations are linearly unstable with respect to
zero-mean fluctuation perturbations in a suitable parameter regime. Assuming
convergence of the full dynamics to an asymptotic equilibrium, we characterize
the limiting mean through an optimality condition on the subspace retained by
the limiting covariance. We then study the associated frozen-covariance mean
equation and prove exponential decay along the retained directions. These
results describe structural mechanisms of the second-order dynamics and do
not imply prevention of covariance collapse for the full dynamics.

The numerical experiments investigate these mechanisms in inverse-problem
regimes with noisy observations and finite ensembles. We verify
accessible-subspace enlargement in a controlled linear problem and under
independent high-dimensional prior sampling, study discrepancy-stopped
reconstructions as the noise level decreases, and consider a nonlinear
high-dimensional Darcy inverse problem. The latter is used to examine
subspace enlargement, the separate roles of attraction and repulsion, and
dependence on the ensemble size, including comparisons with standard and
span-preserving inflated EKI and an assessment of computational work.
}

\section{From continuous-time EKI to an inertial particle system}
\label{sec:continuous-eki-inertial-model}

\subsection{Continuous-time EKI}
\label{subsec:continuous-time-eki}

We recall the continuous-time formulation of EKI and the main structural features that motivate the second-order dynamics introduced below. Let
\[
y = G(u^\dagger) + \eta
\]
be the observed data, where \(u^\dagger \in \mathbb{R}^d\) is the unknown parameter, \(G:\mathbb{R}^d \to \mathbb{R}^K\) is the continuous forward map, and \(\eta\) denotes observational noise. We assume that \(\eta\sim\mathcal{N}(0,\Gamma)\), where \(\Gamma \in \mathbb{R}^{K\times K}\) is a symmetric positive definite covariance matrix, and we consider the least-squares functional
\begin{equation}
	\Phi(u)
	=
	\frac12 \|y-G(u)\|_{\Gamma}^{2}.
	\label{eq:least-squares-functional}
\end{equation}
Here and in the following, \(\|z\|_\Gamma^2 := z^T\Gamma^{-1}z\).

Let \(U(t)=\{u_j(t)\}_{j=1}^J\) be an ensemble of \(J\) particles in \(\mathbb{R}^d\). We denote by
\[
\bar u(t) = \frac1J\sum_{j=1}^J u_j(t),
\qquad
\bar G(t) = \frac1J\sum_{j=1}^J G(u_j(t))
\]
the empirical means in parameter and observation space. The empirical cross-covariance between parameters and model outputs is defined by
\begin{equation}
	C^{uG}(U(t))
	=
	\frac1J\sum_{j=1}^J
	(u_j(t)-\bar u(t))\otimes (G(u_j(t))-\bar G(t))
	\in \mathbb{R}^{d\times K}.
	\label{eq:cross-covariance}
\end{equation}
The continuous-time EKI dynamics \cite{schillingsstuart2017} is then given by
\begin{equation}
	\dot u_j(t)
	=
	C^{uG}(U(t))\Gamma^{-1}(y-G(u_j(t))),
	\qquad j=1,\dots,J.
	\label{eq:continuous-eki}
\end{equation}
This system can be interpreted as an interacting particle method in which the drift of each particle is determined by empirical covariance information extracted from the ensemble.

In the linear case \(G(u)=Gu\), with \(G\in\mathbb R^{K\times d}\), the least-squares functional satisfies
\begin{equation}
\nabla\Phi(u)=G^T\Gamma^{-1}(Gu-y)=Au-b,
\qquad
A:=G^T\Gamma^{-1}G,\quad b:=G^T\Gamma^{-1}y .
\label{eq:Phi-A-b-definitions}
\end{equation}
Moreover,
\[
C^{uG}(U(t))=C(U(t))G^T,
\]
where
\[
C(U(t))
=
\frac1J\sum_{j=1}^J
(u_j(t)-\bar u(t))\otimes(u_j(t)-\bar u(t))
\]
is the empirical covariance. Hence the continuous-time EKI dynamics reduces to
\begin{equation}
	\dot u_j(t)
	=
	-C(U(t))(Au_j(t)-b),
	\qquad j=1,\dots,J.
	\label{eq:linear-continuous-eki}
\end{equation}
Thus, in the linear setting, EKI is a covariance-preconditioned gradient flow. The dynamics is still nonlinear, since the preconditioner \(C(U(t))\) depends on the evolving ensemble.

In the linear case, averaging \eqref{eq:linear-continuous-eki} gives the mean dynamics
\begin{equation}
	\dot{\bar u}(t)
	=
	-C(U(t))(A\bar u(t)-b).
	\label{eq:first-order-mean-dynamics}
\end{equation}
If \(e_j(t)=u_j(t)-\bar u(t)\), then the fluctuations satisfy
\begin{equation}
	\dot e_j(t)
	=
	-C(U(t))A e_j(t),
	\qquad j=1,\dots,J.
	\label{eq:first-order-fluctuation-dynamics}
\end{equation}
These identities show that the evolution of the mean is entirely mediated by the empirical covariance, while the fluctuations determine the directions retained by the ensemble.

A fundamental structural property of EKI is the subspace property: the dynamics remains confined to the affine space generated by the initial ensemble,
\begin{equation}
	u_j(t)\in \bar u(0)+\operatorname{span}\{e_k(0):k=1,\dots,J\},
	\qquad j=1,\dots,J.
	\label{eq:subspace-property}
\end{equation}
Moreover, if \(C(U_*)=0\), then all particles coincide and the right-hand side of
\eqref{eq:linear-continuous-eki} vanishes. Thus every fully collapsed ensemble is a stationary configuration, independently of whether its common value minimizes \(\Phi\). More generally, if the covariance loses rank, the dynamics can only move along the directions retained by the ensemble covariance. At a stationary configuration \(U_*\), \eqref{eq:first-order-mean-dynamics} gives
\[
C(U_*)(A\bar u_*-b)=0,
\]
so that the gradient of \(\Phi\) is required to vanish only on the range of the empirical covariance. This covariance-collapse mechanism motivates the inertial interacting particle system introduced below.

\subsection{The inertial particle system}
\label{subsec:inertial-particle-system}

In the previous section the EKI particles were denoted by \(u_j(t)\), consistently with the inverse problem variable \(u\). In the second-order formulation it is convenient to use a mechanical notation. We therefore denote by
\[
X(t)=\{x_j(t)\}_{j=1}^J,
\qquad
V(t)=\{v_j(t)\}_{j=1}^J
\]
the particle positions and velocities, respectively. The position \(x_j(t)\in\mathbb{R}^d\) plays the role of the parameter particle \(u_j(t)\) in first-order EKI.

We introduce a second-order particle system of the form
\begin{equation}
	\begin{cases}
		\dot x_j(t)=v_j(t),\\[0.4em]
		\dot v_j(t)
		=
		a_j^{\rm EKI}(X(t),V(t))
		+
		a_j^{\rm rep}(X(t))
		+
		a_j^{\rm att}(X(t)),
	\end{cases}
	\qquad j=1,\dots,J.
	\label{eq:second-order-abstract}
\end{equation}
The three terms in the acceleration represent, respectively, a Kalman-type relaxation force, a short-range repulsive interaction, and an attraction force towards the ensemble mean.

As before, we define
\[
\bar x(t)=\frac1J\sum_{j=1}^J x_j(t),
\qquad
\bar G(t)=\frac1J\sum_{j=1}^J G(x_j(t)),
\]
and the empirical cross-covariance
\begin{equation*}
	C^{xG}(X(t))
	=
	\frac1J\sum_{j=1}^J
	(x_j(t)-\bar x(t))\otimes (G(x_j(t))-\bar G(t)).
	%\label{eq:second-order-cross-covariance}
\end{equation*}
The Kalman-type relaxation force is defined by
\begin{equation*}
	a_j^{\rm EKI}(X(t),V(t))
	=
	-\gamma v_j(t)
	+
	\beta C^{xG}(X(t))\Gamma^{-1}(y-G(x_j(t))),
	%\label{eq:second-order-eki-force}
\end{equation*}
where \(\gamma>0\) is a damping coefficient and \(\beta>0\) controls the strength of the EKI correction. The damping term dissipates kinetic energy, while the second term is the natural Kalman-type force inherited from continuous-time EKI.

The repulsive interaction is introduced to counteract premature collapse of the ensemble. We define
\begin{equation*}
	a_j^{\rm rep}(X(t))
	=
	k\sum_{i\neq j}
	f(\|x_i(t)-x_j(t)\|)
	(x_j(t)-x_i(t)),
	%\label{eq:second-order-repulsion}
\end{equation*}
where \(k\geq0\) is the repulsion strength and \(f:[0,\infty)\to(0,\infty)\) is a smooth non-increasing interaction kernel. In the numerical experiments and in the stability discussion below, we use the regularized inverse-power kernel
\begin{equation}
	f(r)=\frac{1}{(\varepsilon+r)^p},
	\qquad
	\varepsilon>0,\quad p>1.
	\label{eq:repulsive-kernel}
\end{equation}
This choice produces a strong short-range repulsion while remaining bounded at \(r=0\). The parameter \(\varepsilon\) controls the regularization near the origin, whereas \(p\) controls the decay of the interaction at large distances.

{\color{black}
	\begin{remark} \label{rem:normalization:repulsion}
		When comparing ensembles with different cardinalities, it is natural to
		account for the number of terms in the pairwise interaction sum. We will
		therefore also consider the ensemble-size normalized parametrization
		\[
		k=k_J:=\frac{\kappa}{J},
		\]
		where \(\kappa\geq0\) is independent of \(J\). We retain \(k\) in the
		formulation and analysis below in order to allow for a general repulsion
		strength.	
	\end{remark}
}

The attraction force towards the ensemble mean is defined by
\begin{equation*}
	a_j^{\rm att}(X(t))
	=
	-\alpha(x_j(t)-\bar x(t)),
	%\label{eq:second-order-attraction}
\end{equation*}
with \(\alpha\geq0\). Its role is to prevent the ensemble from dispersing excessively. The proposed dynamics is therefore based on a competition between attraction and repulsion: {\color{black} the attraction term promotes collective coherence, while the repulsive
term counteracts contraction of the ensemble at short distances.}

Combining the above definitions we obtain the second-order EKI system
\begin{equation}
	\begin{cases}
		\dot x_j(t)=v_j(t),\\[0.4em]
		\begin{aligned}
			\dot v_j(t)
			&=
			-\gamma v_j(t)
			+
			\beta C^{xG}(X(t))\Gamma^{-1}(y-G(x_j(t))) \\
			&\quad
			+
			k\displaystyle\sum_{i\neq j}
			f(\|x_i(t)-x_j(t)\|)
			(x_j(t)-x_i(t))
			-
			\alpha(x_j(t)-\bar x(t)),
		\end{aligned}
	\end{cases}
	\label{eq:second-order-eki}
\end{equation}
for \(j=1,\dots,J\).

The model can be interpreted as an inertial, heavy-ball-type reformulation of continuous-time EKI. In the overdamped regime \(\gamma\gg1\), with \(\beta/\gamma=\mathcal{O}(1)\), the velocity equation formally relaxes to a first-order Kalman-type drift. At the same time, the additional attraction--repulsion mechanism changes the internal geometry of the ensemble and is designed to reduce the tendency of first-order EKI to collapse prematurely.

{\color{black}
	
	\begin{proposition}[Generalized subspace property]
		\label{prop:generalized-subspace}
		Let $(X(t),V(t))$ be a classical solution of \eqref{eq:second-order-eki}.
		Define
		\[
		\tilde x_j(t):=x_j(t)-\bar x(t),
		\qquad
		\tilde v_j(t):=v_j(t)-\bar v(t),
		\]
		and let
		\[
		\mathcal S_0
		:=
		\operatorname{span}
		\left\{
		\tilde x_j(0),\tilde v_j(0):
		j=1,\ldots,J
		\right\}.
		\]
		Then
		\[
		\tilde x_j(t),\tilde v_j(t)\in\mathcal S_0,
		\qquad j=1,\ldots,J,
		\]
		for every time for which the solution exists.
		
		Moreover, defining
		\[
		\mathcal V_0
		:=
		\mathcal S_0+\operatorname{span}\{\bar v(0)\}
		=
		\operatorname{span}
		\left\{
		\tilde x_j(0),v_j(0):
		j=1,\ldots,J
		\right\},
		\]
		we have
		\[
		x_j(t)\in \bar x(0)+\mathcal V_0,
		\qquad
		v_j(t)\in\mathcal V_0,
		\qquad j=1,\ldots,J.
		\]
	\end{proposition}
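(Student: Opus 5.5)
The plan is to write the dynamics of the centred variables $(\tilde x_j,\tilde v_j)$ as a linear, time-dependent ODE system whose coefficients are scalars along the solution, together with a projection onto $\mathcal S_0^\perp$. The invariance of $\mathcal S_0$ will then follow from uniqueness for a linear homogeneous ODE.

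\textbf{Step 1: the equations for the centred variables.} Averaging \eqref{eq:second-order-eki} over $j$, I expect the following.
\begin{itemize}
\item The repulsion terms cancel pairwise, since $f(\|x_i-x_j\|)$ is symmetric in $i,j$.
\item The attraction term averages to zero.
\item The averaged Kalman term is $\beta C^{xG}\Gamma^{-1}(y-\bar G)$.
\end{itemize}
Hence $\dot{\bar x}=\bar v$ and $\dot{\bar v}=-\gamma\bar v+\beta C^{xG}\Gamma^{-1}(y-\bar G)$. Subtracting gives $\dot{\tilde x}_j=\tilde v_j$ and
\[
\dot{\tilde v}_j=-\gamma\tilde v_j-\beta C^{xG}\Gamma^{-1}\bigl(G(x_j)-\bar G\bigr)+k\sum_{i\neq j}f_{ij}(t)(\tilde x_j-\tilde x_i)-\alpha\tilde x_j,
\]
where $f_{ij}(t):=f(\|x_i(t)-x_j(t)\|)$ is a known continuous scalar function along the solution.

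The key observation concerns the Kalman term. We have $C^{xG}w=\frac1J\sum_l \tilde x_l\,\langle G(x_l)-\bar G,w\rangle$, so it equals $\sum_l c_{jl}(t)\tilde x_l$ with continuous scalar coefficients
\[
c_{jl}(t)=-\tfrac{\beta}{J}\langle G(x_l)-\bar G,\Gamma^{-1}(G(x_j)-\bar G)\rangle .
\]
Consequently the centred variables $Z=(\tilde x_j,\tilde v_j)_j\in(\mathbb R^d)^{2J}$ satisfy a linear system $\dot Z=(M(t)\otimes I_d)Z$, where $M(t)$ is a continuous $2J\times 2J$ real matrix.

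\textbf{Step 2: invariance of $\mathcal S_0$.} Let $P$ be the orthogonal projection onto $\mathcal S_0^\perp$, and apply it componentwise. Because $M(t)\otimes I_d$ acts on each $\mathbb R^d$ factor only by scalar combinations, $P$ commutes with it. Therefore $W:=PZ$ solves the same linear ODE $\dot W=(M(t)\otimes I_d)W$ with $W(0)=0$. Uniqueness for linear ODEs with continuous coefficients then gives $W\equiv0$, that is, $\tilde x_j(t),\tilde v_j(t)\in\mathcal S_0$. I consider this step the crux. The point to watch is that the nonlinear terms were rewritten with coefficients frozen along the given solution, so that no Lipschitz issue arises: continuity of the coefficients follows from $X\in C^1$ and the continuity of $G$ and $f$.

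\textbf{Step 3: the mean.} Write $\bar v(t)=\bar v(0)+\int_0^t\dot{\bar v}$. The average drift is $\dot{\bar v}=-\gamma\bar v+\beta C^{xG}\Gamma^{-1}(y-\bar G)$, and $C^{xG}(\cdot)$ lies in $\operatorname{span}\{\tilde x_l(t)\}\subset\mathcal S_0$ by Step 2. Solving the linear equation by variation of constants gives
\[
\bar v(t)=e^{-\gamma t}\bar v(0)+\int_0^te^{-\gamma(t-s)}\,\beta C^{xG}(s)\Gamma^{-1}(y-\bar G(s))\,ds\in\mathcal V_0 .
\]
Then $\bar x(t)=\bar x(0)+\int_0^t\bar v\in\bar x(0)+\mathcal V_0$, and combining with Step 2 yields $v_j=\bar v+\tilde v_j\in\mathcal V_0$ and $x_j=\bar x+\tilde x_j\in\bar x(0)+\mathcal V_0$.

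Finally, the identity $\mathcal S_0+\operatorname{span}\{\bar v(0)\}=\operatorname{span}\{\tilde x_j(0),v_j(0)\}$ is immediate. Indeed, $v_j(0)=\tilde v_j(0)+\bar v(0)$, and conversely $\bar v(0)=\frac1J\sum_j v_j(0)$.
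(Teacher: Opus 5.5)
Your proof is correct and follows the same route as the paper. You average to get the mean equation, subtract to get the fluctuation equations, show that $\mathcal S_0$ is invariant for $(\tilde x_j,\tilde v_j)$, and then integrate the mean velocity equation using $\operatorname{Range}(C^{xG})\subseteq\mathcal S_0$.

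The one place you go further is the invariance step, which the paper only asserts from the fact that every term lies in the span of the current fluctuations. Your frozen-coefficient system $\dot Z=(M(t)\otimes I_d)Z$, together with the commuting projection onto $\mathcal S_0^\perp$, justifies this for any classical solution. It does so without needing uniqueness for the nonlinear system, which is not guaranteed when $G$ is only assumed continuous.
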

	
	\begin{proof}
		First observe that, for every $q\in\mathbb R^K$,
		\[
		C^{xG}(X)q
		=
		\frac1J\sum_{\ell=1}^J
		\tilde x_\ell
		\bigl(G(x_\ell)-\bar G\bigr)^T q,
		\]
		and therefore
		\[
		C^{xG}(X)q
		\in
		\operatorname{span}
		\{\tilde x_1,\ldots,\tilde x_J\}.
		\]
		
		Averaging the velocity equations in \eqref{eq:second-order-eki}, the attraction
		and repulsion terms cancel, and we obtain
		\[
		\dot{\bar v}
		=
		-\gamma\bar v
		+
		\beta C^{xG}(X)\Gamma^{-1}(y-\bar G).
		\]
		Subtracting this equation from the equation for $v_j$ gives
		\[
		\begin{aligned}
			\dot{\tilde x}_j
			&=
			\tilde v_j,\\
			\dot{\tilde v}_j
			&=
			-\gamma\tilde v_j
			+
			\beta C^{xG}(X)\Gamma^{-1}
			\bigl(\bar G-G(x_j)\bigr)
			-\alpha\tilde x_j\\
			&\quad
			+
			k\sum_{i\neq j}
			f(\|\tilde x_i-\tilde x_j\|)
			(\tilde x_j-\tilde x_i).
		\end{aligned}
		\]
		Every term on the right-hand side belongs to the linear span of the current
		position and velocity fluctuations. Since initially
		$\tilde x_j(0),\tilde v_j(0)\in\mathcal S_0$, the subspace
		$\mathcal S_0$ is invariant under the fluctuation dynamics. Hence
		\[
		\tilde x_j(t),\tilde v_j(t)\in\mathcal S_0.
		\]
		
		The mean equation and the inclusion
		\[
		\operatorname{Range}(C^{xG}(X(t)))
		\subseteq \mathcal S_0
		\]
		then imply
		\[
		\bar v(t)\in
		\mathcal S_0+\operatorname{span}\{\bar v(0)\}
		=\mathcal V_0.
		\]
		Since $\dot{\bar x}=\bar v$,
		\[
		\bar x(t)-\bar x(0)
		=
		\int_0^t\bar v(s)\,\mathrm{d}s
		\in\mathcal V_0.
		\]
		Finally,
		\[
		x_j(t)-\bar x(0)
		=
		\bar x(t)-\bar x(0)+\tilde x_j(t)
		\in\mathcal V_0,
		\]
		and
		\[
		v_j(t)=\bar v(t)+\tilde v_j(t)\in\mathcal V_0.
		\]
		This proves the result.
	\end{proof}
	
	\begin{remark}[Role of the initial velocities]
		\label{rem:initial-velocities-subspace}
		If $v_j(0)=0$ for all $j$, then
		\[
		\mathcal V_0
		=
		\operatorname{span}
		\{\tilde x_j(0):j=1,\ldots,J\},
		\]
		and the usual EKI affine-subspace restriction is recovered. In this case,
		the attraction--repulsion mechanism can redistribute the particles and
		increase their spread, but it cannot generate directions that are absent
		from the initial ensemble.
		
		More generally, if some initial velocities have components outside
		$\operatorname{span}\{\tilde x_j(0)\}$, then the accessible subspace
		may be strictly enlarged. Indeed,
		\[
		\dim(\mathcal V_0)
		\leq
		\min\{d,2J-1\},
		\]
		whereas the subspace generated by the initial position anomalies has
		dimension at most $J-1$. Thus nonzero initial velocities may enlarge the
		set of directions explored by the inertial dynamics, although a
		finite-dimensional subspace restriction remains.
	\end{remark}
}

The analysis below\textcolor{black}{, instead,} focuses on the linear case for which the Kalman force reduces to a covariance-preconditioned gradient term.

\section{Analysis in the linear inverse problem setting}
\label{sec:analysis-linear-model}

We analyze the inertial EKI system introduced in Section \ref{subsec:inertial-particle-system} in the linear inverse problem setting. Thus \(G(x)=Gx\), and
\[
\nabla\Phi(x)=Ax-b,
\qquad
A=G^T\Gamma^{-1}G,\quad b=G^T\Gamma^{-1}y.
\]
Since \(C^{xG}(X)=C(X)G^T\), the particle positions satisfy
\begin{equation}
	\begin{aligned}
		\ddot x_j(t)
		+
		\gamma \dot x_j(t)
		&=
		-\beta C(X(t))(Ax_j(t)-b) \\
		&\quad
		+
		k\sum_{i\neq j}
		f(\|x_i(t)-x_j(t)\|)
		(x_j(t)-x_i(t)) \\
		&\quad
		-
		\alpha(x_j(t)-\bar x(t)),
	\end{aligned}
	\label{eq:second-order-position-form}
\end{equation}
for \(j=1,\dots,J\). It is a covariance-preconditioned heavy-ball dynamics coupled with attraction--repulsion interactions within the ensemble. This is the form used throughout the analysis. The associated velocities are \(v_j=\dot x_j\).

We first record the equations satisfied by the ensemble mean and the fluctuations. Let
\[
\tilde x_j(t):=x_j(t)-\bar x(t),
\qquad
\frac1J\sum_{j=1}^J \tilde x_j(t)=0.
\]

\begin{lemma}[Mean and fluctuation dynamics]
	\label{lem:second-order-mean-fluctuations}
	Let \(X(t)\) be the position component of a solution of the linear second-order EKI system \eqref{eq:second-order-position-form}. Then the ensemble mean satisfies
	\begin{equation}
		\ddot{\bar x}(t)
		+
		\gamma \dot{\bar x}(t)
		=
		-\beta C(X(t))(A\bar x(t)-b),
		\label{eq:second-order-mean-dynamics}
	\end{equation}
	whereas the fluctuations satisfy
	\begin{equation}
		\begin{aligned}
			\ddot{\tilde x}_j(t)
			+
			\gamma \dot{\tilde x}_j(t)
			&=
			-\beta C(X(t))A\tilde x_j(t)
			-
			\alpha \tilde x_j(t)
			+
			k\sum_{i\neq j}
			f(\|\tilde x_i(t)-\tilde x_j(t)\|)
			\bigl(\tilde x_j(t)-\tilde x_i(t)\bigr),
		\end{aligned}
		\label{eq:second-order-fluctuation-dynamics}
	\end{equation}
	for \(j=1,\dots,J\).
\end{lemma}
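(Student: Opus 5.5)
The plan is to average the position equations \eqref{eq:second-order-position-form} over $j$ to get the mean equation. Subtracting that from the equation for $x_j$ then gives the fluctuation equation. This mirrors the first-order computation leading to \eqref{eq:first-order-mean-dynamics}--\eqref{eq:first-order-fluctuation-dynamics} and the velocity averaging already done in the proof of Proposition~\ref{prop:generalized-subspace}.

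\textbf{Step 1: averaging.} Summing \eqref{eq:second-order-position-form} over $j$ and dividing by $J$ gives $\ddot{\bar x}+\gamma\dot{\bar x}$ on the left, since differentiation commutes with finite averages. On the right:
\begin{itemize}
\item the Kalman term becomes $-\beta C(X)(A\bar x-b)$, because $C(X)$ and $A$ do not depend on $j$ and $\frac1J\sum_j(Ax_j-b)=A\bar x-b$;
\item the attraction term averages to $-\alpha\cdot\frac1J\sum_j\tilde x_j=0$;
\item the repulsion term vanishes, which is the only step needing an argument.
\end{itemize}

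\textbf{Step 2: cancellation of the repulsion.} Write the double sum as $\sum_{i\neq j}F_{ij}$ with $F_{ij}:=f(\|x_i-x_j\|)(x_j-x_i)$. Since $\|x_i-x_j\|=\|x_j-x_i\|$, we have $F_{ij}=-F_{ji}$. Exchanging the summation indices $i\leftrightarrow j$ then shows that the sum equals its own negative, so it is zero. This gives \eqref{eq:second-order-mean-dynamics}.

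\textbf{Step 3: fluctuations.} Subtract \eqref{eq:second-order-mean-dynamics} from \eqref{eq:second-order-position-form}. The left side becomes $\ddot{\tilde x}_j+\gamma\dot{\tilde x}_j$. On the right:
\begin{itemize}
\item the Kalman terms give $-\beta C(X)\big((Ax_j-b)-(A\bar x-b)\big)=-\beta C(X)A\tilde x_j$, since $b$ cancels;
\item the attraction term is already $-\alpha\tilde x_j$;
\item in the repulsion term, $x_j-x_i=\tilde x_j-\tilde x_i$ and $\|x_i-x_j\|=\|\tilde x_i-\tilde x_j\|$, because the common mean $\bar x$ cancels in every difference.
\end{itemize}
This yields \eqref{eq:second-order-fluctuation-dynamics}.

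No step is a real obstacle; the only point needing care is the antisymmetry in Step 2. Regularity is not an issue: since we work with a classical solution, $x_j\in C^2$, so $\bar x$ and $\tilde x_j$ are $C^2$ and the computations are legitimate.
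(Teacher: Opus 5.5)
Your proposal is correct and follows the paper's proof: average the position equations over $j$, noting that the attraction term averages to zero and the repulsion cancels by antisymmetry, then subtract the mean equation to get the fluctuation dynamics. Your index swap using $F_{ij}=-F_{ji}$, and your remark that $\bar x$ cancels in every pairwise difference, spell out what the paper states in one line.
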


\begin{proof}
Summing~\eqref{eq:second-order-position-form} over $j$ and dividing by $J$ gives~\eqref{eq:second-order-mean-dynamics}. Indeed, the attraction term averages to zero and the repulsion also vanishes by antisymmetry. Subtracting~\eqref{eq:second-order-mean-dynamics} from~\eqref{eq:second-order-position-form} gives~\eqref{eq:second-order-fluctuation-dynamics}.
\end{proof}

The mean dynamics~\eqref{eq:second-order-mean-dynamics} is an inertial EKI equation. The internal ensemble geometry is controlled by~\eqref{eq:second-order-fluctuation-dynamics}, where Kalman contraction, mean attraction, and pairwise repulsion compete. This separation is one of the useful structural properties of the model.

\begin{proposition}[Local well-posedness and continuation criterion]
	\label{prop:local-well-posedness}
	Assume that \(f\in C^1([0,\infty))\) and that
	\(z\mapsto f(\|z\|)z\) is locally Lipschitz on \(\mathbb R^d\). Then, for every initial datum
	\[
	(X(0),\dot X(0))\in (\mathbb{R}^d)^J\times(\mathbb{R}^d)^J,
	\]
	the system \eqref{eq:second-order-position-form} admits a unique maximal classical solution
	\[
	(X,\dot X)\in C^1([0,T_{\max});(\mathbb{R}^d)^J\times(\mathbb{R}^d)^J),
	\]
	with \(0<T_{\max}\leq+\infty\). Moreover, if
	\[
	\sup_{t\in[0,T_{\max})}
	\left(
	\sum_{j=1}^J \|x_j(t)\|^2
	+
	\sum_{j=1}^J \|\dot x_j(t)\|^2
	\right)
	<+\infty,
	\]
	then \(T_{\max}=+\infty\).
\end{proposition}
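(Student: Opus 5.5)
The plan is to rewrite \eqref{eq:second-order-position-form} as a first-order autonomous system on $\mathbb R^{2dJ}$ and apply the Picard--Lindelöf theorem together with the standard blow-up alternative. Set $Y=(X,V)$ with $V=\dot X$, and write the system as $\dot Y=F(Y)$, where
\[
F(X,V)=\Bigl(V,\;-\gamma V-\beta C(X)(AX-b)+R(X)-\alpha(X-\bar X)\Bigr).
\]
Here $C(X)(AX-b)$ denotes the vector with components $C(X)(Ax_j-b)$, and $R_j(X)=k\sum_{i\neq j}f(\|x_i-x_j\|)(x_j-x_i)$. Existence, uniqueness and maximality of a $C^1$ solution on $[0,T_{\max})$ then follow once $F$ is shown to be locally Lipschitz on $(\mathbb R^d)^J\times(\mathbb R^d)^J$.

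I will check local Lipschitz continuity term by term.

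\textbf{Linear terms.} The map $V\mapsto V$, the damping $-\gamma V$ and the attraction $X\mapsto -\alpha(x_j-\bar x)$ are linear, hence globally Lipschitz.

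\textbf{Kalman term.} The entries of $C(X)=\frac1J\sum_\ell (x_\ell-\bar x)\otimes(x_\ell-\bar x)$ are quadratic polynomials in $X$. Hence $X\mapsto C(X)(Ax_j-b)$ is a polynomial map of degree three, so it is $C^\infty$ and in particular locally Lipschitz.

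\textbf{Repulsion.} Each summand is $g(x_j-x_i)$ with $g(z)=f(\|z\|)z$. By hypothesis $g$ is locally Lipschitz, and composing it with the linear map $X\mapsto x_j-x_i$ keeps this property. A finite sum of locally Lipschitz maps is locally Lipschitz.

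This is the only step where some care is needed, since $\|\cdot\|$ is not differentiable at $0$. That is exactly why the Lipschitz property of $g$ is assumed directly rather than derived. For the kernel \eqref{eq:repulsive-kernel} the assumption is easy to verify: $f$ is $C^1$ and bounded on $[0,\infty)$. For $\|z\|,\|w\|\le M$ one has
\[
\|g(z)-g(w)\|\le f(\|z\|)\,\|z-w\|+|f(\|z\|)-f(\|w\|)|\,\|w\|\le\bigl(\sup f+M\sup_{[0,M]}|f'|\bigr)\|z-w\|,
\]
using $\bigl|\|z\|-\|w\|\bigr|\le\|z-w\|$.

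For the continuation criterion I will use the classical blow-up alternative: if $T_{\max}<\infty$, then $\|Y(t)\|\to\infty$ as $t\to T_{\max}$. Suppose instead that the quantity $\sum_j\|x_j\|^2+\|\dot x_j\|^2$ stays bounded by $M^2$ and that $T_{\max}<\infty$. The trajectory then remains in the compact ball $B_M$, and $F$ is bounded there by some $L$. Hence $Y$ is $L$-Lipschitz in time, so $Y(t)$ is Cauchy as $t\to T_{\max}$ and has a limit $Y^*$. Solving the equation again from $Y^*$ extends the solution beyond $T_{\max}$, which contradicts maximality.

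I expect no real obstacle. The argument is a standard Cauchy--Lipschitz application, and the only point needing attention is the non-smoothness of $z\mapsto f(\|z\|)z$ at the origin, which is covered by the hypothesis.
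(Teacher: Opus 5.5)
Your proposal is correct and follows the paper's proof step by step. You rewrite the system in first-order form, check local Lipschitz continuity term by term (linear damping and attraction, a cubic polynomial Kalman term, and repulsion via the assumed Lipschitz property of $z\mapsto f(\|z\|)z$), then apply Picard--Lindel\"of and the standard continuation criterion; your explicit Cauchy-limit argument and the Lipschitz bound for the kernel \eqref{eq:repulsive-kernel} only fill in details the paper states without proof.
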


\begin{proof}
	Writing the system in first-order form for the variables
	\[
	Z(t)=(X(t),\dot X(t))\in(\mathbb{R}^d)^J\times(\mathbb{R}^d)^J,
	\]
	we obtain an autonomous ODE
	\[
	\dot Z(t)=F(Z(t)).
	\]
	The damping and attraction terms are linear. The Kalman term
	\[
	-\beta C(X)(Ax_j-b)
	\]
	is polynomial in the particle positions, because \(C(X)\) is quadratic in \(X\). Hence it is locally Lipschitz. The repulsive term is locally Lipschitz by the assumption on \(z\mapsto f(\|z\|)z\). Therefore \(F\) is locally Lipschitz on the phase space. The Picard--Lindelöf theorem gives existence and uniqueness of a maximal solution on some interval \([0,T_{\max})\).
	
	The continuation criterion is the standard one for finite-dimensional ODEs with locally Lipschitz vector fields. If the solution remains bounded as \(t\uparrow T_{\max}\), then \(F\) remains bounded and locally Lipschitz on a compact set containing the trajectory.
\end{proof}

For the regularized inverse-power kernel
\eqref{eq:repulsive-kernel}
the interaction term is nonsingular at particle collisions and satisfies the local Lipschitz assumption in Proposition~\ref{prop:local-well-posedness}. Thus local well-posedness holds without any additional structural assumption.

\begin{remark}[On global-in-time bounds]
	\label{rem:global-well-posedness-commutation}
	A possible route to global-in-time bounds is to impose the commutation property
	\[
	AC(X(t))=C(X(t))A
	\qquad\text{for all } t\geq0.
	\]
	Under this additional assumption, the Kalman term in the fluctuation equation is compatible with a Lyapunov functional. Let \(W\) be a potential associated with the repulsive force, defined by
	\[
	W'(r)=-r f(r).
	\]
	Consider the internal energy
	\[
	\begin{aligned}
		E_{\rm int}(t)
		&=
		\frac12\sum_{j=1}^J \|\dot{\tilde x}_j(t)\|^2
		+
		\frac{\alpha}{2}\sum_{j=1}^J \|\tilde x_j(t)\|^2 \\
		&\quad
		+
		\frac{k}{2}\sum_{i\neq j}
		W(\|\tilde x_i(t)-\tilde x_j(t)\|)
		+
		\frac{\beta J}{4}
		\operatorname{Tr}\!\left(C(X(t))A C(X(t))\right).
	\end{aligned}
	\]
	Using the fluctuation equation \eqref{eq:second-order-fluctuation-dynamics}, the attraction and repulsion terms cancel with the derivatives of their corresponding potentials. The remaining Kalman contribution cancels with the derivative of the last term
	provided \(A\) and \(C(X(t))\) commute. In that case
	\[
	\frac{\mathrm{d}}{\mathrm{d}t}E_{\rm int}(t)
	=
	-\gamma\sum_{j=1}^J \|\dot{\tilde x}_j(t)\|^2
	\leq0.
	\]
	Since the attraction term is quadratic and dominates the repulsive potential at large distances, this estimate yields uniform bounds on the fluctuations and on their velocities. In particular, the covariance matrix \(C(X(t))\), which depends only on the fluctuations, remains bounded and, thus, the mean then solves the non-autonomous linear equation
	\[
	\ddot{\bar x}(t)
	+
	\gamma\dot{\bar x}(t)
	+
	\beta C(X(t))A\bar x(t)
	=
	\beta C(X(t))b,
	\]
	with bounded coefficients on finite time intervals. Hence the mean cannot blow up in finite time.
\end{remark}

\subsection{Instability of collapsed configurations and non-collapsed equilibria}
\label{subsec:collapse-instability}

We now study the local stability of fully collapsed configurations. These configurations are stationary states of the particle system. If all particles have the same position and zero velocity, the empirical covariance vanishes and both the attraction and repulsion terms are zero. The following result shows that, for a sufficiently strong repulsive interaction, such configurations are unstable with respect to zero-mean perturbations.

\begin{proposition}[Instability of collapsed configurations]
	\label{prop:collapsed-instability}
	Assume that \(f\) is continuous at the origin and let \(f(0)>0\). Consider a fully collapsed configuration
	\[
	x_1=\cdots=x_J=x_*,
	\qquad
	\dot x_1=\cdots=\dot x_J=0.
	\]
	Then this configuration is linearly unstable with respect to zero-mean fluctuation perturbations under the supercritical condition
	\begin{equation}
		kJf(0)>\alpha.
		\label{eq:collapse-instability-condition}
	\end{equation}
	In particular, for the kernel \eqref{eq:repulsive-kernel}, condition \eqref{eq:collapse-instability-condition} becomes
	\begin{equation*}
		k>\frac{\alpha\varepsilon^p}{J}.
		%\label{eq:collapse-instability-condition-kernel}
	\end{equation*}
\end{proposition}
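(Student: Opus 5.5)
The plan is to linearize the fluctuation equation \eqref{eq:second-order-fluctuation-dynamics} from Lemma~\ref{lem:second-order-mean-fluctuations} around the collapsed state \(\tilde x_j=0\), \(\dot{\tilde x}_j=0\), and then exhibit an eigenvalue with positive real part. For zero-mean perturbations we write \(\tilde x_j=\delta\xi_j\) with \(\sum_j\xi_j=0\).

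\textbf{Kalman term.} Since \(C(X)=\frac1J\sum_\ell\tilde x_\ell\otimes\tilde x_\ell\) is quadratic in the fluctuations, the term \(-\beta C(X)A\tilde x_j\) is cubic and drops out at first order. The mean equation decouples, so it plays no role for zero-mean perturbations.

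\textbf{Attraction term.} This term is already linear and gives \(-\alpha\xi_j\).

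\textbf{Repulsion term.} Write \(g(z)=f(\|z\|)z\). Its linearization at \(z=0\) is \(f(0)z\). The correction \((f(\|z\|)-f(0))z\) is \(o(\|z\|)\) by continuity of \(f\) at \(0\). This is the only delicate point: it gives a Fréchet derivative at the origin, which is all we need for linearization there, without needing \(f\in C^1\). Hence
\[
k\sum_{i\neq j}f(0)(\xi_j-\xi_i)
= kf(0)\bigl(J\xi_j-\textstyle\sum_i\xi_i\bigr)
= kJf(0)\,\xi_j ,
\]
where the last equality uses the zero-mean constraint.

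Combining the three contributions, the linearized system decouples into identical equations,
\[
\ddot\xi_j+\gamma\dot\xi_j=\bigl(kJf(0)-\alpha\bigr)\xi_j,
\qquad j=1,\dots,J,
\]
acting on the invariant zero-mean subspace \(\{\sum_j\xi_j=0\}\). This subspace is nontrivial for \(J\ge2\); take for instance \(\xi_1=w=-\xi_2\) and all other \(\xi_j=0\).

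Set \(\mu:=kJf(0)-\alpha\). The characteristic equation is
\[
\lambda^2+\gamma\lambda-\mu=0,
\qquad
\lambda_\pm=\frac{-\gamma\pm\sqrt{\gamma^2+4\mu}}{2}.
\]
Under \eqref{eq:collapse-instability-condition} we have \(\mu>0\), so \(\sqrt{\gamma^2+4\mu}>\gamma\) and \(\lambda_+>0\) for every \(\gamma>0\). The corresponding solution \(\xi_j(t)=e^{\lambda_+t}\xi_j(0)\), with \(\dot\xi_j(0)=\lambda_+\xi_j(0)\), grows exponentially. This establishes linear instability.

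For the kernel \eqref{eq:repulsive-kernel} we have \(f(0)=\varepsilon^{-p}\), so the condition \(kJ\varepsilon^{-p}>\alpha\) rearranges to \(k>\alpha\varepsilon^p/J\).

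I expect no real obstacle. The only points requiring care are justifying that the Kalman term has vanishing linearization (it is cubic) and using the zero-mean constraint to collapse \(\sum_{i\neq j}(\xi_j-\xi_i)\) to \(J\xi_j\).
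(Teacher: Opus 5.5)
Your proposal is correct and follows the paper's proof essentially step by step: you linearize the fluctuation dynamics at the collapsed state, discard the Kalman term as higher order, use continuity of $f$ at $0$ together with the zero-mean constraint to reduce the repulsion to $kJf(0)\xi_j$, and read off a positive root of $\lambda^2+\gamma\lambda-(kJf(0)-\alpha)=0$. The differences are cosmetic: you write $\lambda_+$ explicitly where the paper uses the negative product of the roots, and you spell out the Fréchet-derivative justification for the repulsion term and the need for $J\ge 2$.
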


\begin{proof}
	Let
	\[
	x_j(t)=x_*+\delta x_j(t),
	\qquad
	\dot x_j(t)=\delta \dot x_j(t),
	\]
	where the perturbations are assumed to have zero mean.
	At the fully collapsed configuration the empirical covariance is zero. Therefore the linearization of the Kalman term does not contribute to first order.
	The attraction term linearizes as
	\[
	-\alpha(x_j-\bar x)
	=
	-\alpha \delta x_j.
	\]
	For the repulsive term, since \(f\) is continuous at the origin, we have to first order
	\[
	f(\|x_i-x_j\|)=f(0)+o(1).
	\]
	Thus, using the zero-mean condition,
	\begin{align*}
	k\sum_{i\neq j}
	f(\|x_i-x_j\|)
	(x_j-x_i)
	&=
	kf(0)\sum_{i\neq j}(\delta x_j-\delta x_i)
	+
	\text{higher-order terms} \\
	&= kf(0) J\delta x_j
	+
	\text{higher-order terms}
	\end{align*}
	
	Hence, using \eqref{eq:second-order-fluctuation-dynamics}, we obtain that the linearized fluctuation dynamics is
	\begin{equation*}
	\ddot{\delta x}_j
	+
	\gamma\dot{\delta x}_j
	-
	\bigl(kJf(0)-\alpha\bigr)\delta x_j
	=
	0, \qquad j=1,\dots,J.
	%\label{eq:linearized-collapsed-fluctuations}
	\end{equation*}
	The characteristic equation for each fluctuation mode is
	\[
	\lambda^2+\gamma\lambda-\bigl(kJf(0)-\alpha\bigr)=0.
	\]
	If \(kJf(0)>\alpha\), the product of the two roots is negative. Therefore one eigenvalue is positive and the collapsed configuration is linearly unstable in the zero-mean fluctuation subspace.
	
	For \(f(r)=(\varepsilon+r)^{-p}\), one has \(f(0)=\varepsilon^{-p}\), and the condition \(kJf(0)>\alpha\) is equivalent to
	\[
	k>\frac{\alpha\varepsilon^p}{J}.
	\]
	This concludes the proof.
\end{proof}

\begin{remark}[Neutral translation mode]
	The instability described in Proposition~\ref{prop:collapsed-instability} is transverse to the collective translation mode. Perturbations where all particles are shifted by the same vector do not change the ensemble spread and they are not described by the zero-mean fluctuation dynamics. The relevant stability property for covariance collapse is therefore stability with respect to perturbations of the internal configuration of the ensemble.
\end{remark}

\begin{remark}[Subcritical regime]
	If the subcritical condition \(kJf(0)<\alpha\) holds true, collapsed configurations are linearly stable with respect to zero-mean fluctuation perturbations. Therefore the attraction dominates the linearized repulsion which is too weak to destabilize covariance collapse.
\end{remark}

We relate the instability of collapsed configurations to the possible asymptotic states of the ensemble.

Recall that an ensemble configuration \(X\) is collapsed if \(C(X)=0\).

The ensemble spread is given by
\[
S(X(t)):=\frac1J\sum_{j=1}^J\|\tilde x_j(t)\|^2
=\operatorname{Tr}(C(X(t))),
\]
and
\begin{align}
	\frac{\mathrm d^2}{\mathrm dt^2}S(X(t))
	+
	\gamma\frac{\mathrm d}{\mathrm dt}S(X(t))
	&=
	\frac{2}{J}\sum_{j=1}^J
	\|\dot{\tilde x}_j(t)\|^2
	-
	2\beta\operatorname{Tr}\!\left(C(X(t))AC(X(t))\right)
	\notag\\
	&\quad
	-
	2\alpha S(X(t))
	+
	\frac{k}{J}
	\sum_{i\neq j}
	f(r_{ij}(t))r_{ij}(t)^2,
	\label{eq:spread-dynamics}
\end{align}
where \(r_{ij}(t)=\|\tilde x_i(t)-\tilde x_j(t)\|\). This identity shows the competition between the repulsive contribution, which increases the spread, and the attraction and Kalman terms, that tend to decrease it.

\begin{corollary}[Non-collapsed attracting equilibria]
	\label{prop:noncollapsed-asymptotic-equilibria}
	Assume that \(f\) is continuous at the origin, \(f(0)>0\), and
	\eqref{eq:collapse-instability-condition} holds true.
	Let \((X(t),\dot X(t))\) be a global solution of \eqref{eq:second-order-position-form} converging to an asymptotic equilibrium
	\[
	(X(t),\dot X(t))\to (X_\infty,0)
	\qquad\text{as } t\to\infty.
	\]
	If the limiting equilibrium is asymptotically stable with respect to zero-mean fluctuation perturbations, then it cannot be fully collapsed, i.e.
	\[
	C(X_\infty)\neq0 \quad \text{ and } \quad S(X_\infty)>0.
	\]
\end{corollary}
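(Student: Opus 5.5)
The argument is by contradiction, reducing everything to Proposition~\ref{prop:collapsed-instability}. Suppose the limiting equilibrium $(X_\infty,0)$ is fully collapsed, i.e.\ $C(X_\infty)=0$. Since
\[
C(X_\infty)=\frac1J\sum_{j}\tilde x_{j,\infty}\otimes\tilde x_{j,\infty}
\]
is a sum of positive semidefinite rank-one matrices, its trace $S(X_\infty)=\frac1J\sum_j\|\tilde x_{j,\infty}\|^2$ vanishes. Hence every fluctuation vanishes and $x_{1,\infty}=\cdots=x_{J,\infty}=x_*$ for some $x_*\in\mathbb R^d$. The same positivity argument gives the converse: $S(X_\infty)>0$ if and only if $C(X_\infty)\neq0$. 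So it suffices to exclude the collapsed case, and both conclusions of the corollary then follow at once.

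Next, I check that the hypotheses of Proposition~\ref{prop:collapsed-instability} are met at the collapsed state $(x_*,\dots,x_*;0,\dots,0)$. The function $f$ is continuous at the origin, $f(0)>0$, and the supercritical condition \eqref{eq:collapse-instability-condition} holds by assumption. The proposition then says that the zero-mean linearization
\[
\ddot{\delta x}_j+\gamma\dot{\delta x}_j-(kJf(0)-\alpha)\delta x_j=0
\]
has an eigenvalue
\[
\lambda_+=\tfrac12\Bigl(-\gamma+\sqrt{\gamma^2+4(kJf(0)-\alpha)}\Bigr)>0 .
\]
Its eigenvector lies in the zero-mean fluctuation subspace: one can take $\delta x_1=-\delta x_2=w$ with $\delta x_j=0$ for $j\ge3$, which requires $J\ge2$.

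The final step passes from linear instability to failure of asymptotic stability, and this is the main obstacle. If "asymptotically stable with respect to zero-mean fluctuation perturbations" is read at the linearized level, as in Proposition~\ref{prop:collapsed-instability} and the subcritical remark, the contradiction is immediate: a linearization with a positive real eigenvalue cannot be asymptotically stable. If a nonlinear notion is intended, I would invoke the standard instability theorem (Lyapunov/Chetaev, or the unstable manifold theorem). It applies to the autonomous ODE $\dot Z=F(Z)$ restricted to the invariant zero-mean fluctuation subspace, which is invariant by Lemma~\ref{lem:second-order-mean-fluctuations}, with $F$ locally Lipschitz by Proposition~\ref{prop:local-well-posedness}.

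The delicate point in the nonlinear route is regularity at the collapsed state. The Kalman term is quadratic, so it is $C^1$ with zero derivative there. The repulsive term, however, equals $f(0)(x_j-x_i)$ plus a remainder $(f(\|x_i-x_j\|)-f(0))(x_j-x_i)$, which is only $o(\|\delta x\|)$ by continuity of $f$ at $0$. That is exactly the $C^1$-type condition (linear part plus $o(\|\delta x\|)$ remainder) under which the instability theorem for a linearization with an eigenvalue of positive real part applies. Either route shows the collapsed state is not asymptotically stable in the zero-mean directions, contradicting the hypothesis, so $C(X_\infty)\neq0$ and $S(X_\infty)>0$.
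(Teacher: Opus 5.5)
Your proposal is correct and follows the same route as the paper: assume $X_\infty$ is fully collapsed, then contradict the assumed asymptotic stability using the linear instability from Proposition~\ref{prop:collapsed-instability} under $kJf(0)>\alpha$. Your additional steps make explicit what the paper's short proof leaves implicit, namely the equivalence $C(X_\infty)=0\iff S(X_\infty)=0\iff$ full collapse, the explicit zero-mean unstable eigenvector (which needs $J\ge 2$), and Lyapunov's indirect instability theorem for the closed fluctuation system with an $o(\|\delta x\|)$ remainder.
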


\begin{proof}
	Assume by contradiction that the limiting equilibrium is fully collapsed. Then
	\[
	x_1^\infty=\cdots=x_J^\infty=x_*,
	\qquad
	\dot x_1^\infty=\cdots=\dot x_J^\infty=0,
	\]
	that is \(C(X_\infty)=0\).
	
	However, by Proposition~\ref{prop:collapsed-instability}, under condition
	\eqref{eq:collapse-instability-condition}, every fully collapsed equilibrium is linearly unstable with respect to zero-mean fluctuation perturbations. Hence such an equilibrium cannot be asymptotically stable in the internal configuration of the ensemble. This contradicts the assumed asymptotic stability of the limiting equilibrium.
\end{proof}

%{\color{black}
%	\begin{remark} \label{rem:repulsion:independent:J}
%		Under the normalized parametrization of Remark \ref{rem:normalization:repulsion}, the collapse-instability
%		condition \(kJf(0)>\alpha\) reduces to
%		\[
%		\kappa f(0)>\alpha,
%		\]
%		and is therefore independent of the ensemble size. The effect of this normalization is investigated numerically in Section~\ref{sec:darcy-ensemble-size}.
%	\end{remark}
%}

{\color{black}
	
	\begin{remark}[Role and selection of the parameters]
		\label{rem:parameter-choice}\label{rem:repulsion:independent:J}
		The parameters of the second-order dynamics have distinct roles.
		The coefficients $\gamma$ and $\beta$ control, respectively, the damping
		of the inertial motion and the strength of the Kalman force, whereas
		$\alpha$ and $k$ determine the competition between attraction and
		repulsion in the fluctuation dynamics. For the inverse-power kernel
		\eqref{eq:repulsive-kernel}, $\varepsilon$ regularizes the interaction at short distances
		and $p$ controls its spatial decay.
		
		The instability condition
		\[
		kJf(0)>\alpha
		\]
		provides a structural criterion for whether complete ensemble collapse is
		locally unstable, but it should not be interpreted as a general parameter
		selection rule. In particular, increasing the repulsion strength beyond
		this threshold does not necessarily improve the inversion: excessive
		repulsion may lead to an overdispersed ensemble. Conversely, dominant
		attraction may promote contraction. This balance is illustrated
		numerically by the interaction ablation in
		Section~\ref{sec:darcy-interaction-ablation}.
		
		When varying the ensemble size, the normalization
		$k=k_J=\kappa/J$ of Remark~\ref{rem:normalization:repulsion} removes the explicit linear dependence
		on $J$ from the local collapse-instability condition. Beyond these
		structural considerations, however, the appropriate quantitative balance
		between the parameters remains problem dependent, and we do not propose
		a universal tuning strategy in the present work.
	\end{remark}
	
}

\subsection{Limiting mean dynamics: optimality and frozen-covariance decay}
\label{subsec:limiting-mean-dynamics}

We now discuss the behavior of the ensemble mean in the limiting covariance regime. 
First, we characterize the optimality condition satisfied by asymptotic equilibria. 
Then we consider the frozen-covariance dynamics, which provides a linear asymptotic model along the retained directions.

The limiting covariance may be nonzero but still rank-deficient. Therefore, the equilibrium condition for the ensemble mean does not necessarily imply full optimality in \(\mathbb{R}^d\). It only yields optimality along the directions retained by the limiting ensemble covariance.

\begin{proposition}[Optimality on the retained subspace]
	\label{prop:optimality-retained-subspace}
	Let \((X(t),\dot X(t))\) be a global solution of \eqref{eq:second-order-position-form} such that
	\[
	(X(t),\dot X(t))\to (X_\infty,0)
	\qquad\text{as } t\to\infty.
	\]
	Let
	\[
	\bar x_\infty
	:=
	\frac1J\sum_{j=1}^J x_j^\infty,
	\qquad
	C_\infty:=C(X_\infty),
	\]
	and define the retained subspace
	$
	\mathcal V_\infty:=\operatorname{Range}(C_\infty).
	$
	Then
	\begin{equation}
		C_\infty(A\bar x_\infty-b)=0,
		\label{eq:retained-optimality-condition}
	\end{equation}
	i.e. \(\bar x_\infty\) is a stationary point of \(\Phi\) restricted to
	$
	\mathcal A_\infty:=\bar x_\infty+\mathcal V_\infty.
	$
	If, in addition, \(A\) is positive definite on \(\mathcal V_\infty\), then \(\bar x_\infty\) is the unique minimizer of \(\Phi\) on \(\mathcal A_\infty\).
\end{proposition}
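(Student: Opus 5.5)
The plan is to pass to the limit in the mean equation \eqref{eq:second-order-mean-dynamics} of Lemma~\ref{lem:second-order-mean-fluctuations}. I expect the only genuinely delicate point to be justifying that the limiting configuration is an equilibrium, i.e.\ that $\ddot{\bar x}(t)\to0$.

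\textbf{Step 1 (the limit is an equilibrium).} I write the system in first-order form $\dot Z=F(Z)$ with $Z=(X,\dot X)$, as in Proposition~\ref{prop:local-well-posedness}. The vector field $F$ is continuous and $Z(t)\to Z_\infty=(X_\infty,0)$, so $\dot Z(t)=F(Z(t))\to F(Z_\infty)$. If a component of $F(Z_\infty)$ were nonzero, say equal to $c\neq0$, then the corresponding component of $Z(t)$ would grow like $ct$ for large $t$. This contradicts convergence, so $F(Z_\infty)=0$. In particular $\ddot x_j(t)\to0$ for every $j$, and therefore $\ddot{\bar x}(t)\to0$. This is the main obstacle: the statement does not assume $\ddot X\to0$, and the standard argument above is exactly what supplies it.

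\textbf{Step 2 (limit in the mean equation).} Averaging gives $\dot{\bar x}(t)\to0$ and $\bar x(t)\to\bar x_\infty$. The map $X\mapsto C(X)$ is polynomial, so $C(X(t))\to C_\infty$. Letting $t\to\infty$ in \eqref{eq:second-order-mean-dynamics} yields
\[
0=-\beta C_\infty(A\bar x_\infty-b).
\]
Since $\beta>0$, this is \eqref{eq:retained-optimality-condition}.

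\textbf{Step 3 (restricted stationarity).} $C_\infty$ is symmetric positive semidefinite, so $\ker C_\infty=\mathcal V_\infty^\perp$. Hence $\nabla\Phi(\bar x_\infty)=A\bar x_\infty-b$ is orthogonal to $\mathcal V_\infty$. For every $w\in\mathcal V_\infty$,
\[
\frac{\mathrm d}{\mathrm ds}\Phi(\bar x_\infty+sw)\Big|_{s=0}=w^T(A\bar x_\infty-b)=0,
\]
so $\bar x_\infty$ is a stationary point of $\Phi|_{\mathcal A_\infty}$.

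\textbf{Step 4 (unique minimizer).} $\Phi$ is quadratic, so for $w\in\mathcal V_\infty$
\[
\Phi(\bar x_\infty+w)=\Phi(\bar x_\infty)+w^T(A\bar x_\infty-b)+\tfrac12 w^TAw=\Phi(\bar x_\infty)+\tfrac12 w^TAw.
\]
If $A$ is positive definite on $\mathcal V_\infty$, the last term is strictly positive whenever $w\neq0$. Hence $\bar x_\infty$ is the unique minimizer of $\Phi$ on $\mathcal A_\infty$.
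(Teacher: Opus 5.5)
Your proof is correct and follows essentially the same route as the paper: you pass to the limit in the mean equation \eqref{eq:second-order-mean-dynamics}, use $\operatorname{Ker}(C_\infty)=\mathcal V_\infty^\perp$ to obtain stationarity on $\mathcal A_\infty$, and conclude uniqueness from strict convexity of $\Phi$ on $\mathcal A_\infty$, which you make explicit through the quadratic expansion. Your Step~1 adds rigor where the paper only asserts that the acceleration of the mean vanishes in the limit: from $\dot Z=F(Z)\to F(Z_\infty)$ you show that a nonzero limit would contradict the convergence of $Z(t)$.
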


\begin{proof}
	By the mean equation \eqref{eq:second-order-mean-dynamics},
	\[
	\ddot{\bar x}(t)
	+
	\gamma\dot{\bar x}(t)
	=
	-\beta C(X(t))(A\bar x(t)-b).
	\]
	Since \((X(t),\dot X(t))\to (X_\infty,0)\), we have
	\[
	\dot{\bar x}(t)\to0,
	\qquad
	C(X(t))\to C_\infty,
	\qquad
	\bar x(t)\to\bar x_\infty.
	\]
	At the limiting equilibrium, the acceleration of the mean vanishes. Passing to the limit in the mean equation gives
	\[
	C_\infty(A\bar x_\infty-b)=0.
	\]
	
	Since \(C_\infty\) is symmetric positive semidefinite, its range is orthogonal to its null space:
	\[
	\operatorname{Range}(C_\infty)
	=
	\operatorname{Ker}(C_\infty)^\perp.
	\]
	Therefore \eqref{eq:retained-optimality-condition} implies
	$
	A\bar x_\infty-b
	=
	\nabla\Phi(\bar x_\infty)
	\in
	\operatorname{Ker}(C_\infty)
	=
	\mathcal V_\infty^\perp.
	$
	Hence,
	$
	\nabla\Phi(\bar x_\infty)\perp\mathcal V_\infty,
	$
	and \(\Phi(\bar x_\infty)\) is first-order optimal on the affine space \(\mathcal A_\infty=\bar x_\infty+\mathcal V_\infty\).
	
	Finally, if \(A\) is positive definite on \(\mathcal V_\infty\), then the restriction of the quadratic functional \(\Phi\) to \(\mathcal A_\infty\) is strictly convex. Therefore the stationary point \(\bar x_\infty\) is the unique minimizer of \(\Phi\) on that affine space.
\end{proof}

We conclude the analysis with a decay estimate for the mean dynamics in the frozen-covariance regime.

Let
\[
\mathcal V_\infty:=\operatorname{Range}(C_\infty)
\]
be the subspace retained by the limiting covariance, and let \(x^\dagger\) be a minimizer of \(\Phi\) on the affine space associated with \(\mathcal V_\infty\). Equivalently, \(x^\dagger\) satisfies
\begin{equation}
	C_\infty(Ax^\dagger-b)=0.
	\label{eq:frozen-retained-minimizer}
\end{equation}
The frozen-covariance mean dynamics is
\begin{equation}
	\ddot{\bar x}(t)
	+
	\gamma \dot{\bar x}(t)
	=
	-\beta C_\infty(A\bar x(t)-b).
	\label{eq:frozen-mean-dynamics}
\end{equation}

\begin{proposition}[Decay for the frozen-covariance mean dynamics]
	\label{prop:frozen-covariance-decay}
	Assume that \(A\) is positive definite on \(\mathcal V_\infty\). Let \(x^\dagger\) satisfy \eqref{eq:frozen-retained-minimizer}, and assume that the initial error and velocity of the frozen dynamics satisfy
	\[
	\bar x(0)-x^\dagger\in\mathcal V_\infty,
	\qquad
	\dot{\bar x}(0)\in\mathcal V_\infty.
	\]
	Then the error
	\[
	r(t):=\bar x(t)-x^\dagger
	\]
	decays exponentially to zero along the retained directions.
\end{proposition}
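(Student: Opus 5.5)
Our approach is to subtract the equilibrium relation \eqref{eq:frozen-retained-minimizer} from \eqref{eq:frozen-mean-dynamics}. This reduces the frozen dynamics to a linear, constant-coefficient, damped second-order system for the error, which we then diagonalize on $\mathcal V_\infty$. Since $C_\infty(Ax^\dagger-b)=0$, the right-hand side of \eqref{eq:frozen-mean-dynamics} equals $-\beta C_\infty A(\bar x-x^\dagger)$. Hence
\[
\ddot r(t)+\gamma\dot r(t)+\beta C_\infty A\,r(t)=0 .
\]

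The first step is invariance of $\mathcal V_\infty$. The operator $M:=C_\infty A$ maps $\mathbb R^d$ into $\operatorname{Range}(C_\infty)=\mathcal V_\infty$. The pair $(r,\dot r)$ solves a linear ODE whose vector field maps $\mathcal V_\infty\times\mathcal V_\infty$ into itself, and by assumption $r(0),\dot r(0)\in\mathcal V_\infty$. By uniqueness, $r(t),\dot r(t)\in\mathcal V_\infty$ for all $t\geq0$, by the same argument as in Proposition~\ref{prop:generalized-subspace}. We may therefore study the restriction $M_\infty:=M|_{\mathcal V_\infty}:\mathcal V_\infty\to\mathcal V_\infty$.

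The second step, which we expect to be the main point, is to show that $M_\infty$ is diagonalizable with strictly positive real eigenvalues, even though $C_\infty A$ is not symmetric. Let $P$ be the orthogonal projector onto $\mathcal V_\infty$ and $C_\infty^{1/2}$ the symmetric square root. The restriction $C_\infty|_{\mathcal V_\infty}$ is symmetric positive definite, so $C_\infty^{1/2}$ is invertible on $\mathcal V_\infty$. Conjugating gives
\[
C_\infty^{-1/2}M_\infty C_\infty^{1/2}=C_\infty^{1/2}AC_\infty^{1/2}\big|_{\mathcal V_\infty}.
\]
This operator is symmetric on $\mathcal V_\infty$. It is also positive definite there: for $w\in\mathcal V_\infty\setminus\{0\}$, set $z=C_\infty^{1/2}w$, which lies in $\mathcal V_\infty\setminus\{0\}$, so $w^TC_\infty^{1/2}AC_\infty^{1/2}w=z^TAz>0$ because $A$ is positive definite on $\mathcal V_\infty$. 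Hence $M_\infty$ has a basis of eigenvectors in $\mathcal V_\infty$ with eigenvalues $0<\mu_1\le\dots\le\mu_m$, where $m=\dim\mathcal V_\infty$.

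The third step expands $r(t)$ in this eigenbasis, which decouples the system into scalar equations
\[
\ddot\rho_i+\gamma\dot\rho_i+\beta\mu_i\rho_i=0 .
\]
The characteristic roots are $\lambda=\tfrac12\bigl(-\gamma\pm\sqrt{\gamma^2-4\beta\mu_i}\bigr)$. Because $\gamma>0$ and $\beta\mu_i>0$, both roots have strictly negative real part. In the underdamped case the real part is $-\gamma/2$, and otherwise the roots are real and negative. The critically damped case $\gamma^2=4\beta\mu_i$ produces a factor $t$, which we absorb by slightly reducing the rate. Set
\[
\omega:=\min_i\bigl(-\max\operatorname{Re}\lambda\bigr)>0 .
\]
Going back through the bounded change of basis then gives $\|r(t)\|+\|\dot r(t)\|\le K e^{-(\omega-\delta)t}\bigl(\|r(0)\|+\|\dot r(0)\|\bigr)$ for any small $\delta>0$, with $\delta=0$ allowed when no critical damping occurs.

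As a cross-check, or as an alternative route, one could use the energy $\tfrac12\|\dot r\|_{C_\infty^+}^2+\tfrac\beta2 r^TAr+\varepsilon\, r^TC_\infty^+\dot r$ on $\mathcal V_\infty$, where $C_\infty^+$ is the pseudo-inverse. This works because $C_\infty^+ M_\infty = A$ on $\mathcal V_\infty$, which makes the Kalman term a gradient in the $C_\infty^+$ metric; a small $\varepsilon$ cross term then yields exponential decay. The spectral argument above is cleaner, so we would present that one.
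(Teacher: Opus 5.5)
Your proposal is correct and follows essentially the same route as the paper. Both derive $\ddot r+\gamma\dot r+\beta C_\infty A r=0$, pass to the symmetric positive definite operator $C_\infty^{1/2}AC_\infty^{1/2}$ on $\mathcal V_\infty$ (the paper via the substitution $r=C_\infty^{1/2}z$, you via the equivalent similarity transform of $C_\infty A|_{\mathcal V_\infty}$), and conclude from the decoupled damped scalar oscillators. Your explicit invariance argument for $\mathcal V_\infty$ and your handling of the critically damped factor $t$ supply details that the paper leaves implicit.
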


\begin{proof}
	Subtracting \eqref{eq:frozen-retained-minimizer} from \eqref{eq:frozen-mean-dynamics}, we obtain
	\begin{equation}
		\ddot r(t)
		+
		\gamma \dot r(t)
		+
		\beta C_\infty A r(t)
		=
		0.
		\label{eq:frozen-error-dynamics}
	\end{equation}
	Since \(C_\infty\) is symmetric positive semidefinite, its restriction to \(\mathcal V_\infty\) is positive definite. Moreover, by assumption, \(A\) is positive definite on \(\mathcal V_\infty\).
	
	On \(\mathcal V_\infty\), introduce the symmetric positive definite matrix
	\[
	B:=C_\infty^{1/2} A C_\infty^{1/2}.
	\]
	Setting
	\[
	r(t)=C_\infty^{1/2} z(t),
	\]
	equation \eqref{eq:frozen-error-dynamics} is equivalent, on the retained subspace, to
	\begin{equation*}
		\ddot z(t)
		+
		\gamma \dot z(t)
		+
		\beta B z(t)
		=
		0.
		%\label{eq:frozen-z-dynamics}
	\end{equation*}
	Since \(B\) is symmetric positive definite on \(\mathcal V_\infty\), it admits an orthonormal basis of eigenvectors with eigenvalues
	\[
	0<\mu_1\leq \cdots \leq \mu_m,
	\qquad
	m=\dim\mathcal V_\infty.
	\]
	Expanding
	\[
	z(t)=\sum_{\ell=1}^m q_\ell(t)z_\ell,
	\]
	we obtain the decoupled scalar equations
	\[
	\ddot q_\ell(t)
	+
	\gamma \dot q_\ell(t)
	+
	\beta \mu_\ell q_\ell(t)
	=
	0,
	\qquad
	\ell=1,\dots,m.
	\]
	The characteristic roots are
	\[
	\lambda_\ell^\pm
	=
	\frac{-\gamma\pm\sqrt{\gamma^2-4\beta\mu_\ell}}{2}.
	\]
	Since \(\gamma>0\), \(\beta>0\), and \(\mu_\ell>0\), both roots have strictly negative real part. Hence each modal component \(q_\ell(t)\) decays exponentially to zero. Therefore \(z(t)\), and consequently \(r(t)=C_\infty^{1/2}z(t)\), decay exponentially along the retained directions.
\end{proof}

\section{Numerical experiments}
\label{sec:numerics}

{\color{black}
	In this section we investigate the proposed second-order EKI on
	representative inverse problems, focusing on accessible-subspace
	enlargement, discrepancy-based regularization, interaction effects on
	ensemble geometry, and dependence on the ensemble size, with first-order
	EKI-type baselines where appropriate.
}

{\color{black}
	
	\subsection{Numerical setup, diagnostics, and stopping criteria}
	\label{sec:numerical-method}
	\label{subsec:numerical-time-discretization}
	\label{subsec:numerical-diagnostics}
	
	\paragraph{Time discretization.}
	Unless stated otherwise, we discretize the second-order EKI system
	\eqref{eq:second-order-eki} by an implicit--explicit scheme
	\cite{PareschiRusso2005}. The position equation is advanced explicitly,
	while the velocity equation is treated implicitly only in the linear
	damping term. Given
	\[
	X^n=\{x_j^n\}_{j=1}^J,
	\qquad
	V^n=\{v_j^n\}_{j=1}^J,
	\]
	we first set
	\begin{equation}
		x_j^{n+1}
		=
		x_j^n+\Delta t\,v_j^n,
		\qquad
		j=1,\ldots,J.
		\label{eq:numerics-position-update}
	\end{equation}
	The velocity is then updated according to
%	\begin{align}
%		v_j^{n+1}
%		&=
%		v_j^n
%		+
%		\Delta t
%		\Big[
%		-\gamma v_j^{n+1}
%		+
%		\beta C^{xG}(X^{n+1})\Gamma^{-1}
%		\bigl(y-G(x_j^{n+1})\bigr)
%		\Big]
%		\notag\\
%		&\quad
%		+
%		\Delta t
%		\Bigg[
%		k\sum_{i\neq j}
%		f(\|x_i^n-x_j^n\|)
%		(x_j^n-x_i^n)
%		-
%		\alpha(x_j^n-\bar x^n)
%		\Bigg].
%		\label{eq:numerics-velocity-imex-implicit}
%	\end{align}
%	Equivalently,
	\begin{align}
		v_j^{n+1}
		&=
		\frac{1}{1+\gamma\Delta t}v_j^n
		+
		\frac{\beta\Delta t}{1+\gamma\Delta t}
		C^{xG}(X^{n+1})\Gamma^{-1}
		\bigl(y-G(x_j^{n+1})\bigr)
		\notag\\
		&\quad
		+
		\frac{k\Delta t}{1+\gamma\Delta t}
		\sum_{i\neq j}
		f(\|x_i^n-x_j^n\|)
		(x_j^n-x_i^n)
		-
		\frac{\alpha\Delta t}{1+\gamma\Delta t}
		(x_j^n-\bar x^n).
		\label{eq:numerics-velocity-imex}
	\end{align}
	No nonlinear system has to be solved at each time step.
	
	The scheme
	\eqref{eq:numerics-position-update}--\eqref{eq:numerics-velocity-imex}
	is used for the second-order dynamics throughout the numerical section
	unless a different discretization is explicitly stated. In particular,
	the noise-dependence experiment below uses a semi-implicit variant in
	order to handle the increasing stiffness of the Kalman force as the
	observational noise is reduced.
	
	\paragraph{Data misfit and reconstruction error.}
	We monitor the weighted data misfit at the ensemble mean,
	$\Phi(\bar x^n)$.
	
	Throughout the numerical experiments, $\|\cdot\|_h$ denotes the discrete
	norm associated with the parameter representation. For vector-valued
	tests we use the normalized Euclidean norm
	$
	\|z\|_h=d^{-1/2}\|z\|_2,
	$
	whereas for spatially discretized PDE parameters we use the corresponding
	discrete $L^2$ norm.
	
	In synthetic experiments, where the reference parameter
	\(x^\dagger\) is available, we also report the relative reconstruction
	error
	\begin{equation*}
		e_x^n
		=
		\frac{\|\bar x^n-x^\dagger\|_h}
		{\|x^\dagger\|_h}.
		%\label{eq:numerical-mean-error}
	\end{equation*}
	This quantity is used exclusively as an a posteriori diagnostic. It does
	not enter the inversion dynamics, the choice of the algorithmic
	parameters, or the stopping criterion.
	
	\paragraph{Ensemble spread and effective rank.}
	To quantify ensemble diversity, we use the root-mean-square spread
	\begin{equation*}
		S^n :=
		\left(
		\frac1J
		\sum_{j=1}^J
		\|x_j^n-\bar x^n\|_h^2
		\right)^{1/2}.
		%\label{eq:numerical-spread}
	\end{equation*}
	
	The spread measures the overall magnitude of the ensemble fluctuations
	but does not distinguish whether this variance is distributed over many
	directions or concentrated in a lower-dimensional subset. We therefore
	also use the entropy effective rank of the empirical covariance \(C(X^n)\).
	Let
%	\[
%	C(X^n)
%	=
%	\frac1J
%	\sum_{j=1}^J
%	(x_j^n-\bar x^n)
%	\otimes
%	(x_j^n-\bar x^n),
%	\]
%	and let
	\(\lambda_1,\ldots,\lambda_r>0\) denote its nonzero eigenvalues. Defining
	\[
	\pi_i
	=
	\frac{\lambda_i}
	{\sum_{\ell=1}^r\lambda_\ell},
	\]
	we set
	\begin{equation*}
		r_{\rm eff}(X^n)
		=
		\exp\left(
		-\sum_{i=1}^r
		\pi_i\log\pi_i
		\right).
		%\label{eq:numerical-effective-rank}
	\end{equation*}
	%The effective rank measures the number of directions carrying a
	%significant fraction of the ensemble variance and complements the scalar
	%spread diagnostic when assessing covariance collapse.
	If \(C(X^n)\) is numerically indistinguishable from zero, corresponding
	to numerical ensemble collapse, we set \(r_{\rm eff}(X^n)=0\).
	
	\paragraph{Discrepancy stopping.}
	For additive Gaussian observational noise
	$
	\eta\sim\mathcal N(0,\Gamma),
	$
	the whitened noise energy satisfies
	\[
	\|\eta\|_\Gamma^2
	=
	\eta^T\Gamma^{-1}\eta
	\sim\chi_K^2.
	\]
	We therefore use discrepancy-type early stopping of the form
	\begin{equation}
		\Phi(\bar x^n)
		\leq
		\Phi_{\rm disc},
		\qquad
		\Phi_{\rm disc}
		=
		\frac{\tau K}{2},
		\label{eq:discrepancy-stopping}
	\end{equation}
	where the tolerance \(\tau\) is fixed independently of the unknown
	parameter.
	
	For the linear inverse problems below we use
	\[
	\tau=1.05.
	\]
	For the nonlinear Darcy experiments, where \(K=64\), we instead use the
	a priori \(95\%\) Gaussian confidence threshold
	\begin{equation*}
		\Phi_{\rm disc}
		=
		\frac12\chi^2_{K,0.95},
		%\label{eq:chi-square-discrepancy}
	\end{equation*}
	which is equivalent to
	\[
	\tau
	=
	\frac{\chi^2_{K,0.95}}{K}
	\simeq1.3074.
	\]
	The two choices play the same role: they prescribe the admissible data
	residual from the observational noise model without using the true
	parameter.
	
	The reported discrepancy stopping time is the
	first discrete time at which
	\eqref{eq:discrepancy-stopping} is satisfied. A finite final integration
	time is used as a numerical search horizon. If the discrepancy threshold
	is not reached within that interval, the run is reported as unsuccessful
	with respect to discrepancy stopping, and terminal or minimum-over-time
	diagnostics are reported when relevant.
	
	\paragraph{Computational work.}
	When comparing computational effort, each evaluation of the forward map
	\(G\), either at an ensemble member or at the ensemble mean for diagnostic
	purposes, is counted as one forward solve. Forward-solve counts provide a
	problem-independent proxy for the dominant inverse-problem cost, while
	the direct evaluation of the pairwise interaction requires additional
	algebraic work of order \(J^2\). Wall-clock times are therefore reported
	only as complementary implementation-dependent diagnostics. All numerical
	codes are implemented in MATLAB; the runtime measurements reported below
	were obtained using MATLAB Online.
	
}

{\color{black}
	
	\subsection{Numerical verification of accessible-subspace enlargement}
	\label{sec:numerics-subspace}
	
	%We first consider a controlled linear inverse problem designed to
	%illustrate the enlargement of the accessible subspace induced by
	%nonzero initial velocities.
	
	We consider the one-dimensional elliptic problem
	\begin{equation*}
		-p''(s)+p(s)=u(s),
		\qquad s\in(0,\pi),
		\qquad
		p(0)=p(\pi)=0,
		%\label{eq:elliptic-subspace}
	\end{equation*}
	and recover the forcing term $u$ from noisy observations of the state $p$.
	After a finite-difference discretization on $d$ interior grid points, the
	forward map is linear and can be written as
	\[
	p=Gu,
	\qquad G\in\mathbb R^{d\times d}.
	\]
	Synthetic observations are generated according to
	\[
	y=Gu^\dagger+\eta,
	\qquad
	\eta\sim\mathcal N(0,\sigma^2 I),
	\]
	with
	\[
	d=K=255,
	\qquad
	J=20,
	\qquad
	\sigma=0.1,
	\qquad
	u^\dagger(s)=10\sin(8s).
	\]
	Thus the experiment is performed in the finite-ensemble regime $J\ll d$.
	
	Let $S_\ell$ denote the normalized discrete representation of
	$\sin(\ell s)$. The initial position ensemble is generated from the first six
	sine modes,
	\[
	x_j^0
	=
	\sigma_0\sum_{\ell=1}^{6}\xi_{\ell,j}S_\ell,
	\qquad
	\sigma_0=5\times10^{-2},
	\]
	where the random coefficients are centered over the ensemble. Consequently,
	$\bar x^0=0$ up to round-off and
	\[
	\mathcal S_x
	:=
	\operatorname{span}\{x_j^0-\bar x^0:j=1,\ldots,J\}
	\]
	has dimension six. Since the true parameter is proportional to the eighth
	sine mode, $u^\dagger\notin\mathcal S_x$.
	
	Proposition~\ref{prop:generalized-subspace} shows that, for arbitrary initial
	velocities, the second-order trajectories remain in the affine space generated
	jointly by the initial position anomalies and velocities. To verify this numerically, we consider
	\[
	v_j^0=0,
	\qquad
	v_j^0\in\mathcal S_x,
	\qquad
	v_j^0\in\mathcal S_x^\perp.
	\]
	For the last two cases the velocities are centered and normalized so that
	\[
	\left(
	\frac1J\sum_{j=1}^J\|v_j^0\|_h^2
	\right)^{1/2}
	=0.5.
	\]
	The in-span velocities are generated within $\mathcal S_x$, whereas the
	out-of-span velocities are generated from the modes $7,\ldots,12$. Hence,
	in the latter case,
	\[
	\mathcal S_{xv}
	:=
	\operatorname{span}
	\{x_j^0-\bar x^0,v_j^0:j=1,\ldots,J\}
	\]
	has dimension twelve and contains the direction of $u^\dagger$.
	
	For this structural verification we use
	\[
	\gamma=2,
	\qquad
	\beta=0.5,
	\qquad
	\alpha=0.4,
	\qquad
	k=0.01,
	\qquad
	\varepsilon=1,
	\qquad
	p=1.5,
	\]
	and $\Delta t=5\cdot10^{-2}$. These parameters satisfy
	\[
	kJf(0)=0.2<\alpha,
	\]
	so that transverse round-off perturbations are not amplified by the
	collapse-instability mechanism.
	
	Figure~\ref{fig:subspace-verification} reports the corresponding projection
	residuals. For each particle, the distance from the accessible affine space
	is measured through
	\[
	\|(I-P_{\mathcal S})(x_j(t)-\bar x(0))\|,
	\]
	with $P_{\mathcal S}$ the orthogonal projector onto $\mathcal S$; the reported
	quantity is the corresponding ensemble norm. With zero or in-span initial
	velocities, the distance from $\mathcal S_x$ remains of order $10^{-15}$,
	whereas out-of-span velocities leave $\mathcal S_x$ while remaining in
	$\mathcal S_{xv}$ up to round-off.
	
	\begin{figure}[t]
		\centering
		\includegraphics[width=0.92\textwidth]
		{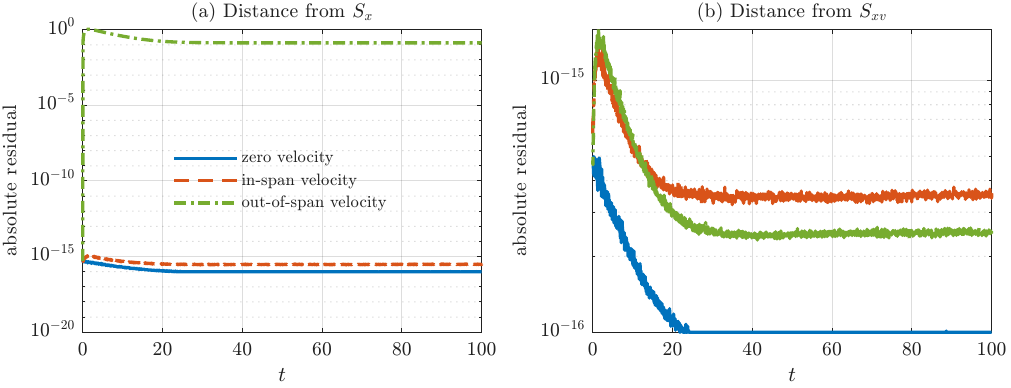}
		\caption{Numerical verification of the generalized subspace property.
			Left: distance of the particle positions from the initial position space
			$\mathcal S_x$. Right: distance from the generalized position--velocity
			space $\mathcal S_{xv}$. Zero and in-span initial velocities preserve
			$\mathcal S_x$, whereas out-of-span initial velocities leave
			$\mathcal S_x$ while remaining in $\mathcal S_{xv}$ up to numerical
			round-off.}
		\label{fig:subspace-verification}
	\end{figure}
	
	The effect of this enlargement can also be quantified independently of the
	particle dynamics. For a linear subspace $\mathcal S$, define
	\[
	e^\star(\mathcal S)
	:=
	\min_{x\in\bar x^0+\mathcal S}
	\frac{\|x-u^\dagger\|_h}{\|u^\dagger\|_h}
	\quad \
	\text{and}
	\ \quad
	\Phi^\star(\mathcal S)
	:=
	\min_{x\in\bar x^0+\mathcal S}\Phi(x).
	\]
%	Both quantities are computed by restricting the problem to
%	$\bar x^0+\mathcal S$: $e^\star(\mathcal S)$ by orthogonal projection,
%	and $\Phi^\star(\mathcal S)$ by solving the corresponding restricted
%	linear least-squares problem.
	Table~\ref{tab:subspace-geometry} compares
	these quantities for $\mathcal S_x$ and $\mathcal S_{xv}$. Here
	$\Phi_{\rm disc}=133.875$.
	
	The position-only space satisfies
	\[
	\Phi^\star(\mathcal S_x)=264.05>\Phi_{\rm disc},
	\]
	so that no trajectory confined to $\bar x^0+\mathcal S_x$ can satisfy the
	discrepancy principle. Hence, standard EKI initialized from the same position
	ensemble $X^0$, as well as any span-preserving inflation that preserves
	the initial anomaly span, remains confined to $\bar x^0+\mathcal S_x$ and
	cannot satisfy the discrepancy principle in this experiment. By contrast,
	\[
	\Phi^\star(\mathcal S_{xv})=102.31<\Phi_{\rm disc},
	\]
	and the true parameter itself is represented in the enlarged space up to
	round-off.
	
	\begin{table}[t]
		\centering
		\caption{Geometry of the accessible affine spaces for the elliptic inverse
			problem. Standard EKI and span-preserving inflation initialized
			from $X^0$ remain confined to $\bar x^0+\mathcal S_x$, whereas
			out-of-span initial velocities enlarge the second-order accessible space
			to $\bar x^0+\mathcal S_{xv}$.}
		\label{tab:subspace-geometry}
		\small
		\begin{tabular}{lccc}
			\toprule
			Accessible space / associated dynamics
			& Dimension & $e^\star$ & $\Phi^\star$ \\
			\midrule
			$\mathcal S_x$ (standard EKI / span-preserving inflation)
			& 6 & 1.0000 & 264.05 \\
			$\mathcal S_{xv}$ (second-order, out-of-span $V_0$)
			& 12 & $1.23\cdot10^{-15}$ & 102.31 \\
			\bottomrule
		\end{tabular}
	\end{table}
	
%	Thus the initial velocity directions do more than introduce kinetic energy:
%	they enlarge the affine space available to the second-order dynamics.
%	In the
%	present construction, the position-only space cannot contain a
%	discrepancy-compatible solution, whereas the enlarged position--velocity
%	space can and contains the true parameter up to round-off. This controlled
%	experiment therefore provides a direct numerical counterpart of
%	Proposition~\ref{prop:generalized-subspace}.
%	The extent to which such
%	additional directions are useful under non-constructed prior sampling is
%	investigated next.
	
}

{\color{black}
	
	\FloatBarrier
	\subsection{Accessible-subspace enlargement under random prior sampling}
	\label{sec:random-prior-robustness}
	
	We now test whether the accessible-subspace enlargement observed in
	Section~\ref{sec:numerics-subspace} persists when the truth, the initial
	positions, and the initial velocities are sampled independently, without
	using either the truth or the observation operator to construct the
	additional velocity directions.
	
	We consider the two-dimensional linear elliptic problem
	\begin{equation*}
		-\delta \Delta p + p = u
		\qquad \text{in } D=(0,1)^2,
		\qquad
		p=0
		\qquad \text{on } \partial D,
		%\label{eq:random-prior-elliptic}
	\end{equation*}
	with $\delta=2\cdot 10^{-3}$.
	The unknown forcing is discretized on a $64\times64$ grid, so that
	$d=4096$, while the state is observed at $16\times16$ uniformly distributed
	sensor locations, giving $K=256$ observations. We use an ensemble of
	$J=50$ particles, hence $J\ll d$.
	
	The truth and the initial ensembles are sampled independently from the same
	smooth Gaussian prior. More precisely, using the discrete sine basis indexed by the spatial
	frequencies $(m_1,m_2)$ in the two coordinate directions, the spectral
	amplitudes are weighted according to
	\begin{equation*}
		w_{m_1,m_2}
		=
		\left[
		1+(\pi\ell)^2(m_1^2+m_2^2)
		\right]^{-s/2},
		\qquad
		\ell=0.08,
		\qquad
		s=1.5,
		%\label{eq:random-prior-spectrum}
	\end{equation*}
	and the overall scale is normalized to give unit pointwise root-mean-square
	standard deviation. For each realization, $u^\dagger$, $X^0$, and a
	preliminary velocity ensemble are drawn independently from this prior.
	The velocities are then centered and rescaled so that their initial
	root-mean-square magnitude equals the spread of the position ensemble.
	In particular, neither the truth nor the observations are used to select
	their directions.
	
	Synthetic observations are generated with independent Gaussian noise whose
	standard deviation is $2\%$ of the root-mean-square noiseless signal.
	We perform $50$ independent realizations, including independent draws of the
	truth, position ensemble, velocity ensemble, and observational noise.
	
	For each realization, the spaces
	\[
	\mathcal S_x
	=
	\operatorname{span}
	\{x_j^0-\bar x^0\}_{j=1}^J,
	\qquad
	\mathcal S_{xv}
	=
	\operatorname{span}
	\{x_j^0-\bar x^0,v_j^0\}_{j=1}^J
	\]
	are computed only a posteriori. Their dimensions are $49$ and $98$,
	respectively, in all realizations. The median relative residual of the truth
	offset outside these spaces decreases from
	\[
	\operatorname{med}
	\frac{\|(I-P_{\mathcal S_x})(u^\dagger-\bar x^0)\|_h}
	{\|u^\dagger-\bar x^0\|_h}
	=
	0.8277
	\]
	to
	\[
	\operatorname{med}
	\frac{\|(I-P_{\mathcal S_{xv}})(u^\dagger-\bar x^0)\|_h}
	{\|u^\dagger-\bar x^0\|_h}
	=
	0.7385.
	\]
	Thus the independently sampled initial velocities enlarge the accessible
	space in directions that contain a non-negligible additional component of
	the truth, despite not using truth-dependent information in their
	construction.
	
	We compare standard EKI, a span-preserving inflated EKI, the second-order
	dynamics with zero initial velocities, and the second-order dynamics with
	the independently sampled nonzero initial velocities described above.
	As an anti-contraction baseline, inspired by standard ensemble inflation
	strategies \cite{AndersonAnderson1999} and related EKI stabilization approaches
	\cite{ArmbrusterHertyVisconti2022}, we augment the standard EKI particle
	update by the outward anomaly term
	\(\Delta t\,\rho(x_j^n-\bar x^n)\), with \(\rho=0.15\).
	Since this term belongs to the current anomaly span, the method preserves
	the initial affine subspace while counteracting ensemble contraction and
	does not enlarge the set of accessible directions.
	
	For both second-order variants we use
	\[
	\gamma=2,\qquad
	\beta=0.5,\qquad
	\alpha=0.4,\qquad
	k=0.1,\qquad
	\varepsilon=10^{-3},\qquad
	p=1.5.
	\]
	All parameters are kept fixed throughout the $50$ realizations, and no
	parameter is selected using the true solution. We use
	$
	\Delta t=0.1,
	\
	T=20.
	$
	
	Since the randomly generated accessible spaces need not contain a
	discrepancy-compatible solution, we first compute the exact restricted
	least-squares values
	$
	\Phi^\star(\mathcal S_x),
	%=
	%\min_{x\in\bar x^0+\mathcal S_x}\Phi(x),
	\
	\Phi^\star(\mathcal S_{xv}).
	%=
	%\min_{x\in\bar x^0+\mathcal S_{xv}}\Phi(x).
	$
	Over the $50$ realizations,
	\[
	\operatorname{med}
	\frac{\Phi^\star(\mathcal S_{x})}{\Phi_{\rm disc}}
	=
	403.37,
	\qquad
	\operatorname{med}
	\frac{\Phi^\star(\mathcal S_{xv})}{\Phi_{\rm disc}}
	=
	136.32.
	\]
	Moreover,
	$
	\Phi^\star(\mathcal S_{xv})>\Phi_{\rm disc}
	$
	in all $50$ realizations. Thus, unlike the controlled experiment of
	Section~\ref{sec:numerics-subspace}, even the enlarged space does not contain
	a discrepancy-compatible solution in the present random-prior experiment.
	Discrepancy reachability is therefore not an informative performance
	criterion here.
	
	Instead, we measure how much of the reduction in data misfit made
	geometrically available by the enlargement
	$\mathcal S_x\subset\mathcal S_{xv}$ is actually realized by the dynamics.
	We define
	\begin{equation*}
		\eta_\Phi
		=
		\frac{
			\Phi^\star(\mathcal S_{x})-\min_{0\le n\Delta t\le T}\Phi(\bar x^n)
		}{
			\Phi^\star(\mathcal S_{x})-\Phi^\star(\mathcal S_{xv})
		}.
		%\label{eq:available-misfit-gain}
	\end{equation*}
	Thus $\eta_\Phi=0$ corresponds to no improvement over the best fit available
	in the original position space, whereas $\eta_\Phi=1$ corresponds to
	realizing the full least-squares improvement available in
	$\mathcal S_{xv}$.
	
	Table~\ref{tab:random-prior-robustness} summarizes the results. The position-only methods remain confined to the original position space, whereas the second-order method with independently sampled nonzero velocities realizes approximately $87\%$ of the data-fit improvement made geometrically available by the enlarged space.
	
	\begin{table}[t]
		\centering
		\caption{Random-prior robustness experiment over $50$ independent
			realizations. Let $t_{\min\Phi}$ denote the discrete time at which
			$\Phi(\bar x)$ attains its minimum over the computed trajectory.
			The parameter error is evaluated a posteriori at $t_{\min\Phi}$.}
		\label{tab:random-prior-robustness}
		\small
		\begin{tabular}{lccc}
			\toprule
			Method
			& $\operatorname{med}\min_t\Phi/\Phi_{\rm disc}$
			& $\operatorname{med}\eta_\Phi$
			& $\operatorname{med} e_x(t_{\min\Phi})$ \\
			\midrule
			Standard EKI
			& 403.38 & $0$ & 0.9929 \\
			Inflated EKI
			& 403.37 & $0$ & 0.9942 \\
			Second-order, $V_0=0$
			& 403.37 & $0$ & 0.9943 \\
			Second-order, random $V_0$
			& 167.28 & 0.8673 & 0.8892 \\
			\bottomrule
		\end{tabular}
	\end{table}
	
	The improvement is consistent across realizations:
	$\eta_\Phi>0.75$ in $46$ out of $50$ trials. The median reconstruction
	error is also reduced from approximately $0.99$ for the position-only
	methods to $0.8892$ with random initial velocities.
	
	Together with Section~\ref{sec:numerics-subspace}, this shows that
	velocity-induced enlargement persists under independent prior sampling.
	However, $\dim(\mathcal S_{xv})=98\ll4096$ and the enlarged space remains	discrepancy-incompatible: random initial velocities enlarge the accessible geometry without removing the finite-ensemble restriction.
	
}

{\color{black}
	
	\subsection{Noise dependence and discrepancy-based regularization}
	\label{sec:noise-regularization}
	
	We next investigate the behavior of the interacting second-order EKI as a discrepancy-stopped inverse procedure as the observational noise level decreases.
	
	We use the same one-dimensional elliptic inverse problem introduced in
	Section~\ref{sec:numerics-subspace}, but replace the deliberately structured
	initialization used there by a smoothness-promoting Gaussian initialization.
	We take
	\[
	d=K=255,
	\qquad
	J=30,
	\qquad
	u^\dagger(s)=10\sin(8s).
	\]
	Let $S_\ell$ denote the normalized discrete sine modes and let
	\[
	B=
	\begin{pmatrix}
		S_1 & \cdots & S_{20}
	\end{pmatrix}
	\in\mathbb R^{d\times 20}.
	\]
	The initial ensemble is sampled in the fixed prior space
	$\operatorname{Range}(B)$. More precisely, writing $x_j^0=B a_j^0$, the
	coefficients are independent centered Gaussian variables with variances
	chosen so that the standard deviation of the physical amplitude of the
	$\ell$-th sine mode decays as $10/\ell$. Thus higher-frequency components are
	progressively penalized while the true eighth mode remains contained in the
	prior support.
	
	The initial velocities are independently sampled in the same prior space,
	centered over the ensemble, and rescaled so that
	\[
	\left(
	\frac1J\sum_{j=1}^J\|v_j^0\|_h^2
	\right)^{1/2}
	=0.5.
	\]
	The same initial positions and velocities are used throughout the entire
	noise study.
	
	We consider the six noise levels
	\[
	\sigma = 0.2 \cdot 2^{-q}, \quad q=0,\dots,5, \quad \ \text{with} \ \ \Gamma=\sigma^2 I.
	\]
	For each noise level we perform $20$ realizations. To obtain a paired
	comparison across noise levels, we first generate independent vectors
	$z_r\sim\mathcal N(0,I)$, $r=1,\ldots,20$, and use
	\[
	y_{\sigma,r}
	=
	Gu^\dagger+\sigma z_r.
	\]
	Hence, for a fixed realization $r$, only the magnitude of the noise is changed
	when $\sigma$ is varied.
%	The average value of $\Phi(u^\dagger)$ over the $20$
%	noise realizations is $127.80$, close to its expected value
%	$K/2=127.5$.
	
	All model parameters are kept fixed across noise levels and noise
	realizations:
	\[
	\gamma=2,
	\qquad
	\beta=0.5,
	\qquad
	\alpha=0.4,
	\qquad
	k=0.1,
	\qquad
	\varepsilon=10^{-3},
	\qquad
	p=1.5.
	\]
	In particular, no parameter is retuned as $\sigma$ decreases. The true
	parameter is used only a posteriori to evaluate the reconstruction error and
	does not enter either the dynamics or the stopping criterion.
	
	For this linear experiment, we use a semi-implicit variant of the scheme in
	Section~\ref{sec:numerical-method} in order to handle the increasing
	stiffness of the Kalman force as $\sigma$ decreases. Since both the initial
	positions and velocities belong to $\operatorname{Range}(B)$,
	Proposition~\ref{prop:generalized-subspace} implies that the full
	second-order dynamics remains in this space. The coefficient formulation
	used below is therefore only a change of coordinates, not an additional
	projection or model reduction.
	
	We thus write
	\[
	x_j^n=B a_j^n,
	\qquad
	v_j^n=B w_j^n,
	\qquad
	H=GB,
	\]
	where $a_j^n,w_j^n\in\mathbb R^{20}$ are the position and velocity
	coefficients, respectively. We denote by $\bar a^n$ and $C_a^n$
%	\[
%	\bar a^n
%	=
%	\frac1J\sum_{j=1}^J a_j^n,
%	\qquad
%	C_a^n
%	=
%	\frac1J\sum_{j=1}^J
%	(a_j^n-\bar a^n)\otimes(a_j^n-\bar a^n)
%	\]
	the empirical mean and covariance of the coefficient ensemble, respectively. In
	particular,
	\[
	C(X^n)=B C_a^n B^\top,
	\qquad
	C(X^n)G^\top=B C_a^n H^\top.
	\]
	
	We freeze $C_a^n$ and the interaction terms at time $t_n$, while treating
	the damping and the linear dependence of the Kalman term implicitly:
	\begin{align*}
		a_j^{n+1}
		&=
		a_j^n+\Delta t\,w_j^{n+1},
		%\label{eq:noise-imex-position}
		\\
		w_j^{n+1}
		&=
		w_j^n
		+\Delta t
		\left[
		-\gamma w_j^{n+1}
		+
		\frac{\beta}{\sigma^2}
		C_a^nH^\top
		\bigl(y-Ha_j^{n+1}\bigr)
		+
		F_j^n
		\right],
		%\label{eq:noise-imex-velocity}
	\end{align*}
	where $F_j^n$ denotes the coefficient representation of the attraction
	and repulsion terms evaluated at time $t_n$. Each time step therefore
	requires only the solution of a $20\times20$ linear system. We use the
	fixed step size
	$
	\Delta t=2.5\cdot10^{-2}
	$
	for every noise level.
	
	For every realization, we use the discrepancy stopping rule of
	Section~\ref{sec:numerical-method} with $\tau=1.05$. A maximum integration
	time $T_{\max}=1000$ is used only as a numerical search horizon. If the
	discrepancy threshold is not reached within this interval, the realization
	is recorded as unsuccessful.
	
	Figure~\ref{fig:noise-study-summary}(a) shows the relative parameter error at
	the discrepancy stopping time. The mean error over the successful
	realizations decreases from $0.623$ at $\sigma=0.2$ to $0.0965$ at
	$\sigma=0.00625$. A log--log fit over the six noise levels gives the empirical
	slope $0.49$. The reference line proportional to $\sigma^{1/2}$ in
	Figure~\ref{fig:noise-study-summary}(a) is included only as a visual guide;
	no theoretical convergence rate is claimed here.
	
	\begin{figure}[t]
		\centering
		\includegraphics[width=\textwidth]
		{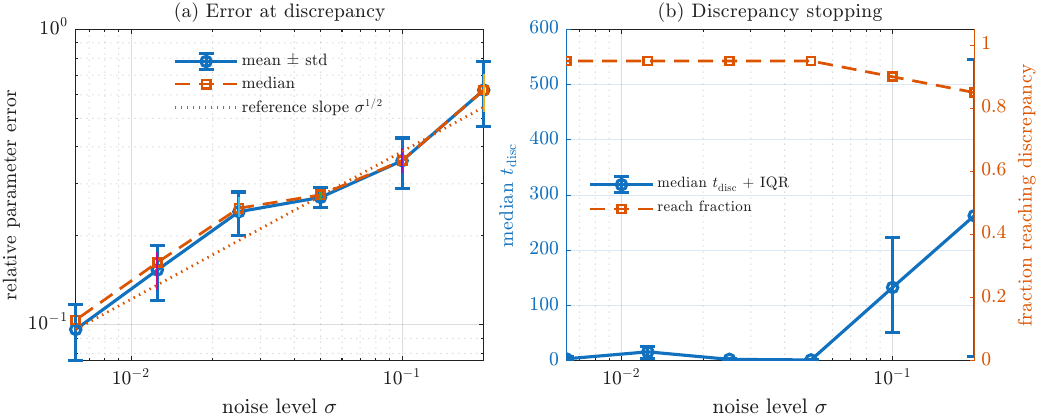}
		\caption{Noise dependence of the discrepancy-stopped reconstruction over
			$20$ paired noise realizations. Left: relative parameter error versus the
			noise standard deviation $\sigma$. Mean and standard deviation are shown
			together with the median; the dotted line has reference slope
			$\sigma^{1/2}$. Right: median discrepancy stopping time with interquartile
			range and fraction of realizations reaching the discrepancy threshold.}
		\label{fig:noise-study-summary}
	\end{figure}
	
	The fraction of realizations reaching the discrepancy threshold is,
	respectively,
	\[
	0.85,\quad
	0.90,\quad
	0.95,\quad
	0.95,\quad
	0.95,\quad
	0.95.
	\]
	Since $\tau=1.05$ gives a relatively tight discrepancy threshold, we report
	these unsuccessful runs explicitly rather than discarding them. Importantly,
	the decrease of the reconstruction error is not an artifact of the changing
	set of successful realizations. Restricting the comparison to the $17$
	paired realizations that reach the discrepancy threshold for all six noise
	levels gives essentially the same trend, with an empirical log--log slope
	approximately equal to $0.48$.
	
	The stopping times exhibit a skewed distribution, as shown in
	Figure~\ref{fig:noise-study-summary}(b). We therefore report their median and
	interquartile range rather than mean and standard deviation. In particular,
	the artificial integration time should not be interpreted as a monotone
	regularization parameter across different values of $\sigma$; the relevant
	comparison here is between the reconstructions selected by the same
	data-dependent discrepancy rule.
	
	Figure~\ref{fig:noise-study-reconstructions} shows representative
	discrepancy-stopped reconstructions for the same normalized noise realization,
	with visibly improved recovery as $\sigma$ decreases.
	
	\begin{figure}[t]
		\centering
		\includegraphics[width=0.92\textwidth]
		{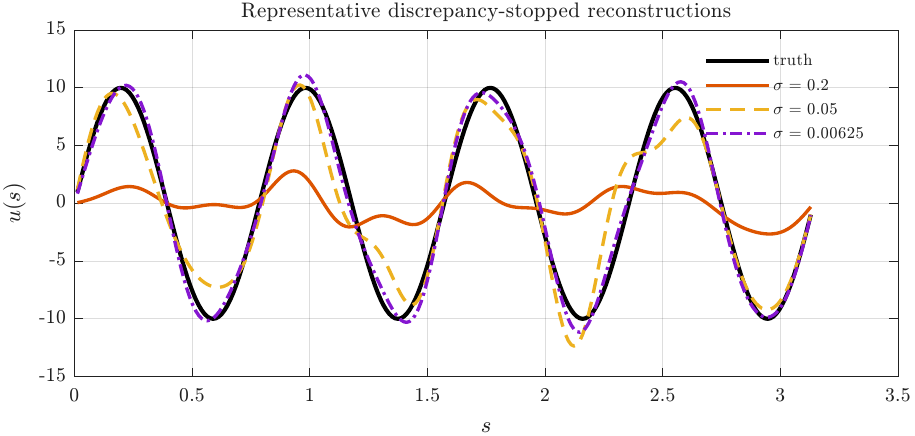}
		\caption{Representative discrepancy-stopped parameter reconstructions
			for the same normalized noise realization at
			$\sigma=0.2$, $0.05$, and $0.00625$. The stopping time is determined
			independently at each noise level by the discrepancy principle.}
		\label{fig:noise-study-reconstructions}
	\end{figure}
	
	Overall, with fixed initialization and algorithmic parameters, the observed
	improvement of the discrepancy-stopped reconstructions as the noise decreases
	is consistent with regularization behavior.
	
}

{\color{black}
	
	\FloatBarrier
	
	\subsection{Nonlinear Darcy inverse problem}
	\label{ssec:darcy}
	
	We finally consider a nonlinear and genuinely high-dimensional inverse
	problem based on Darcy flow.
%	Let
%	\[
%	D=(0,1)^2,
%	\]
%	and
	Consider
	\[
	-\nabla\cdot\bigl(\exp(u)\nabla p\bigr)=1
	\qquad \text{in } D=(0,1)^2,
	\qquad
	p=0
	\qquad \text{on } \partial D,
	\]
	where \(u:D\to\mathbb{R}\) denotes the log-permeability and
	\(p:D\to\mathbb{R}\) the pressure. The Darcy equation is discretized
	by a conservative five-point finite-difference scheme on the Cartesian
	grid, using harmonic averages of the permeability \(\exp(u)\) between
	neighboring grid points. The parameter \(u\) is discretized on a
	\(32\times32\) grid, giving \(d=1024\).
	The forward map
	$
	G:\mathbb{R}^{1024}\to\mathbb{R}^{64}
	$
	consists of pressure observations at \(K=64\) interior sensors
	arranged on an \(8\times8\) grid.
	
	The prior distribution for \(u\) is a smooth Gaussian random field represented
	in a tensor-product cosine basis. We use correlation scale \(0.2\), smoothness
	parameter \(2\), and rescale the field so that its pointwise root-mean-square
	standard deviation is \(0.3\). The truth \(u^\dagger\) is drawn independently
	from this prior. Observations are generated according to
	\[
	y=G(u^\dagger)+\eta,
	\qquad
	\eta\sim\mathcal{N}(0,\Gamma),
	\qquad
	\Gamma=\sigma^2 I_K,
	\]
	where \(\sigma\) is chosen as \(5\%\) of the root-mean-square magnitude of the
	noise-free observations.
	
	For the Darcy experiments we use the \(95\%\) Gaussian discrepancy threshold
	defined in Section~\ref{sec:numerical-method}. For \(K=64\),
	\[
	\Phi_{\rm disc}\simeq 41.84,
	\]
	while for the fixed noise realization used below,
	\[
	\Phi(u^\dagger)\simeq38.35<\Phi_{\rm disc}.
	\]
	
	All Darcy simulations use the time step
	\(\Delta t=2.5\cdot10^{-2}\); the second-order variants are discretized
	by the IMEX scheme of Section~4.1.
	Distances, spreads, and velocity root-mean-square values are computed using
	the discrete \(L^2(D)\) norm introduced in Section~\ref{sec:numerical-method};
	the corresponding inner product is also used to define orthogonal projections.
	
%	The three experiments below address, respectively, accessible-subspace
%	enlargement through nonzero initial velocities, the role of attraction and
%	repulsion in controlling ensemble geometry, and dependence on the ensemble
%	size \(J\).

	\subsubsection{Enlargement of the accessible subspace}
	\label{sec:darcy-enlargement}
	
	We first examine whether the generalized subspace mechanism of
	Proposition~\ref{prop:generalized-subspace} remains visible in the nonlinear
	Darcy problem. We use a small ensemble with \(J=10\). To construct the initial position
	and velocity spaces, we order the cosine modes of the Gaussian prior by
	decreasing spectral energy. The initial position ensemble is sampled using
	only the first eight modes. For the realized ensemble we then form
	\[
	\mathcal S_x
	=
	\operatorname{span}\{x_j^0-\bar x^0:j=1,\ldots,J\},
	\]
	which has dimension
	\[
	\dim(\mathcal S_x)=8.
	\]
	
	For the out-of-span velocity initialization, preliminary velocities are
	sampled using the next nine prior modes and then explicitly projected onto
	\(\mathcal S_x^\perp\). After centering, these provide nine independent
	additional directions, so that
	\[
	\dim(\mathcal S_{xv})=17,
	\qquad
	\dim(\mathcal S_{\rm new})=9,
	\qquad
	\mathcal S_{\rm new}
	:=
	\mathcal S_{xv}\cap\mathcal S_x^\perp.
	\]
	The spectral partition and all velocity directions are determined from the
	prior independently of the truth and the observations.
	
	To distinguish subspace enlargement from the mere introduction of kinetic
	energy, we compare three second-order initializations:
	\[
	V_0=0,
	\qquad
	V_0^{\rm in}\in\mathcal S_x,
	\qquad
	V_0^{\rm out}\in\mathcal S_x^\perp.
	\]
	The in-span velocities are independently sampled from the same first eight
	prior modes and projected onto the realized space \(\mathcal S_x\), whereas
	the out-of-span velocities are obtained from the next nine prior modes and
	projected onto \(\mathcal S_x^\perp\).
	The in-span and out-of-span velocity ensembles have the same root-mean-square
	magnitude,
	\[
	\|V_0^{\rm in}\|_{\rm RMS}
	=
	\|V_0^{\rm out}\|_{\rm RMS}
	=
	2.52\cdot10^{-2},
	\]
	corresponding to \(10\%\) of the initial ensemble spread,
	$
	S^0=2.52\cdot10^{-1}.
	$
	Hence the two nonzero-velocity initializations differ in the orientation,
	rather than the magnitude, of the initial kinetic perturbation.
	
	For all second-order runs we use
	\[
	\beta=1,
	\qquad
	\gamma=2,
	\qquad
	\alpha=2.5,
	\qquad
	k=k_J:=\frac{\kappa}{J},
	\qquad
	\kappa=5,
	\qquad
	\varepsilon=1,
	\qquad
	p=1.5.
	\]
	Since \(f(0)=1\), the full dynamics satisfies
	$
	k_JJf(0)-\alpha
	=
	\kappa f(0)-\alpha
	=
	2.5>0,
	$
	and therefore lies in the collapse-instability regime identified in
	Section~\ref{subsec:collapse-instability}. All methods are run with final search horizon \(T=10\).
	
	The enlargement is relevant for the chosen truth. The relative components
	of the truth offset lying outside the two accessible spaces are
	\[
	\frac{
		\|(I-P_{\mathcal S_x})(u^\dagger-\bar x^0)\|_h
	}{
		\|u^\dagger-\bar x^0\|_h
	}
	\simeq0.269,
	\qquad
	\frac{
		\|(I-P_{\mathcal S_{xv}})(u^\dagger-\bar x^0)\|_h
	}{
		\|u^\dagger-\bar x^0\|_h
	}
	\simeq0.177.
	\]
	Defining
	\[
	q^\dagger_{\rm new}
	:=
	P_{\mathcal S_{\rm new}}(u^\dagger-\bar x^0),
	\]
	the newly accessible component satisfies
	\[
	\frac{\|q^\dagger_{\rm new}\|_h}
	{\|u^\dagger-\bar x^0\|_h}
	\simeq0.203.
	\]
%	Thus a non-negligible part of the truth that is inaccessible from
%	\(\mathcal S_x\) lies in the additional velocity-generated directions.
	
	As a local data-space diagnostic, we linearize the forward map at
	\(\bar x^0\). For a linear subspace \(\mathcal S\), we define
	\[
	\Phi_{\rm lin}^\star(\mathcal S)
	:=
	\min_{z\in\mathcal S}
	\frac12
	\left\|
	y-G(\bar x^0)-DG(\bar x^0)z
	\right\|_\Gamma^2,
	\]
	namely the minimum data misfit attainable within
	\(\bar x^0+\mathcal S\) for the forward map linearized at \(\bar x^0\).
	For the two accessible spaces we obtain
	\[
	\frac{\Phi_{\rm lin}^\star(\mathcal S_x)}
	{\Phi_{\rm disc}}
	\simeq0.972,
	\qquad
	\frac{\Phi_{\rm lin}^\star(\mathcal S_{xv})}
	{\Phi_{\rm disc}}
	\simeq0.817,
	\]
	with corresponding data-space ranks \(8\) and \(17\). Hence enlargement is
	not required merely to make the discrepancy level locally reachable in this
	realization. Rather, the experiment tests whether the additional directions
	are actually explored and whether they contribute to the reconstruction.
	
	Figure~\ref{fig:darcy-enlargement} shows the resulting dynamics. For the
	out-of-span initialization, the relative components of the mean and ensemble
	anomalies outside \(\mathcal S_x\) reach approximately
	\(7.0\cdot10^{-2}\) and \(5.1\cdot10^{-1}\), respectively. At the same
	time, their residuals outside \(\mathcal S_{xv}\) remain at round-off level:
	the corresponding maxima are approximately \(1.3\cdot10^{-14}\) and
	\(1.1\cdot10^{-13}\). By contrast, the cases \(V_0=0\) and
	\(V_0^{\rm in}\in\mathcal S_x\) remain in \(\mathcal S_x\) up to round-off.
%	This directly verifies, in the nonlinear Darcy setting, that out-of-span
%	initial velocities allow the dynamics to leave the initial position space
%	while remaining confined to the generalized position--velocity space.
	
	\begin{figure}[t]
		\centering
		\includegraphics[width=\textwidth]
		{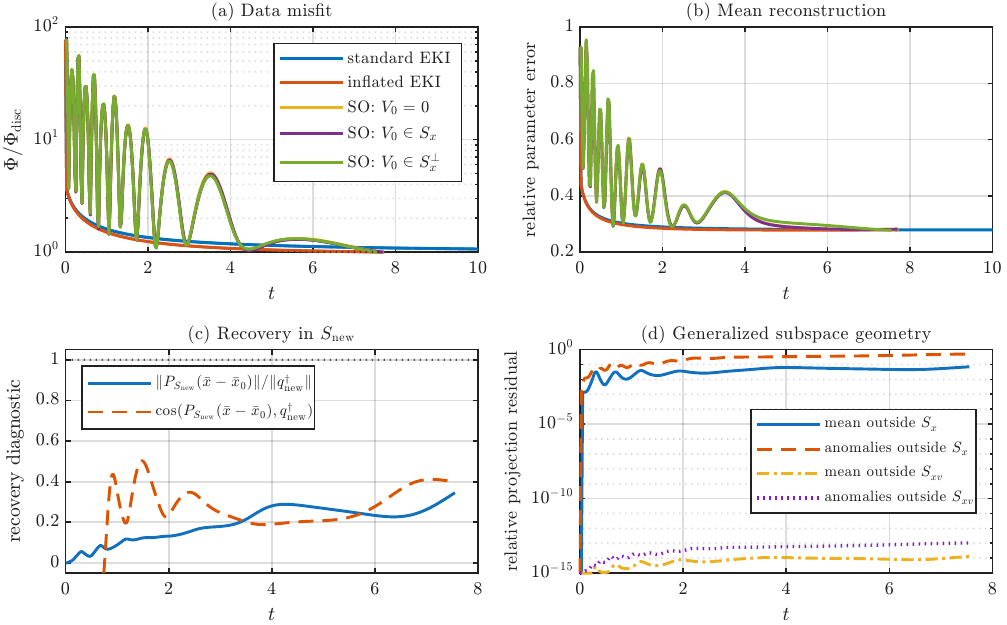}
		\caption{
			Darcy subspace-enlargement experiment with \(J=10\).
			(a) Normalized data misfit, with the discrepancy level indicated by
			the horizontal dotted line.
			(b) Relative reconstruction error of the ensemble mean.
			(c) Amplitude and alignment of the reconstructed component in the
			newly accessible space \(\mathcal S_{\rm new}\) for the out-of-span
			initialization.
			(d) Relative projection residuals of the mean and ensemble anomalies
			outside \(\mathcal S_x\) and \(\mathcal S_{xv}\), again for the
			out-of-span initialization.
		}
		\label{fig:darcy-enlargement}
	\end{figure}
	
	Table~\ref{tab:darcy-enlargement} summarizes the discrepancy-stopped
	results. Standard EKI does not reach the threshold, whereas the inflated EKI
	baseline and all three second-order variants do. The controls \(V_0=0\) and \(V_0^{\rm in}\in\mathcal S_x\) behave almost
	identically, while the out-of-span initialization yields only a modest
	improvement in global reconstruction error.
	
	\begin{table}[t]
		\centering
		\caption{
			Summary of the Darcy subspace-enlargement experiment.
			The reconstruction error and spread are evaluated at the discrepancy
			stopping time whenever the threshold is reached.
		}
		\label{tab:darcy-enlargement}
		\small
		\begin{tabular}{lccc}
			\toprule
			Method
			& Stopping time
			& Parameter error
			& Spread
			\\
			\midrule
			Standard EKI
			& not reached
			& -- & --
			\\
			Inflated EKI
			& \(7.700\)
			& \(0.2789\)
			& \(0.0229\)
			\\
			Second-order, \(V_0=0\)
			& \(7.725\)
			& \(0.2815\)
			& \(0.0783\)
			\\
			Second-order, \(V_0\in\mathcal S_x\)
			& \(7.700\)
			& \(0.2817\)
			& \(0.0783\)
			\\
			Second-order, \(V_0\in\mathcal S_x^\perp\)
			& \(7.550\)
			& \(0.2780\)
			& \(0.0915\)
			\\
			\bottomrule
		\end{tabular}
	\end{table}
	
	The newly accessible directions are not merely visited but also contribute
	to the reconstruction. For the out-of-span initialization, define
	\[
	E_{\rm new}(t)
	=
	\frac{
		\|P_{\mathcal S_{\rm new}}(\bar x(t)-\bar x^0)
		-q^\dagger_{\rm new}\|_h
	}{
		\|q^\dagger_{\rm new}\|_h
	}.
	\]
	At the discrepancy stopping time,
	\[
	E_{\rm new}=0.919,
	\qquad
	\frac{
		\|P_{\mathcal S_{\rm new}}(\bar x-\bar x^0)\|_h
	}{
		\|q^\dagger_{\rm new}\|_h
	}
	=0.346,
	\qquad
	\cos\!\left(
	P_{\mathcal S_{\rm new}}(\bar x-\bar x^0),
	q^\dagger_{\rm new}
	\right)
	=0.399.
	\]
	Thus the dynamics recovers a nonzero part of the missing truth component,
	but only with moderate alignment, consistently with the small improvement in
	the total reconstruction error.
	
	To investigate this limitation, we perform an a posteriori Jacobian-based
	diagnostic along the out-of-span trajectory. Let
	\[
	g_{\rm loc}
	=
	DG(\bar x)^\top\Gamma^{-1}
	\bigl(y-G(\bar x)\bigr)
	\]
	denote the local negative-gradient direction. Near the discrepancy stopping
	time,
	\begin{align*}
		\cos\!\left(
		P_{\mathcal S_{\rm new}}g_{\rm loc},
		q^\dagger_{\rm new}
		\right)
		&\simeq0.775,\\
		\cos\!\left(
		P_{\mathcal S_{\rm new}}C(X)g_{\rm loc},
		q^\dagger_{\rm new}
		\right)
		&\simeq0.239,\\
		\cos\!\left(
		P_{\mathcal S_{\rm new}}C(X)g_{\rm loc},
		P_{\mathcal S_{\rm new}}C^{xG}(X)
		\Gamma^{-1}(y-G(\bar x))
		\right)
		&\simeq0.999.
	\end{align*}
	Hence the local negative-gradient direction is substantially better aligned
	with the missing truth component than its covariance-preconditioned
	counterpart, while the latter is almost parallel to the empirical nonlinear
	Kalman direction. This indicates that, near the discrepancy level, the
	limited recovery in \(\mathcal S_{\rm new}\) is primarily associated with
	the geometry induced by covariance preconditioning rather than with the
	empirical cross-covariance approximation itself. This effect is
	time-dependent, and the covariance can rotate the local direction differently
	at earlier stages of the trajectory.
	
	The quantities involving \(q^\dagger_{\rm new}\) are truth-based and are used
	only as a posteriori diagnostics. The conclusion is therefore twofold:
	out-of-span initial velocities genuinely enlarge the accessible space and
	allow partial recovery along previously unavailable directions, but such
	enlargement does not by itself guarantee that the evolving covariance weights
	those directions favorably for truth reconstruction.
	
}

{\color{black}
	
	\subsubsection{Interaction ablation}
	\label{sec:darcy-interaction-ablation}
	
	We next isolate the role of attraction and repulsion in the second-order
	dynamics. We set
	$
	J=40,
	\
	V_0=0,
	$
	so that \(\mathcal S_{xv}=\mathcal S_x\). Consequently, all methods compared
	below share the same accessible affine space, and differences between them
	do not arise from velocity-induced subspace enlargement.
	
	To probe the interaction mechanism in a regime where covariance collapse may
	be relevant, we deliberately use an underdispersed initial ensemble. If
	\(\xi_j\) denotes a draw from the Gaussian prior introduced above, we set
	\[
	x_j^0
	=
	m_{\rm pr}
	+
	\delta_{\rm under}(\xi_j-m_{\rm pr}),
	\qquad
	\delta_{\rm under}=0.25,
	\]
	where \(m_{\rm pr}\) is the prior mean. Thus the position fluctuations are
	reduced by a factor \(0.25\), corresponding to a factor \(0.25^2\) in the
	initial empirical covariance. The resulting spread and entropy effective
	rank are
	\[
	S^0=7.83\cdot10^{-2},
	\qquad
	r_{\rm eff}(X^0)=10.15.
	\]
	This initialization is used as a controlled stress test for the interaction
	mechanism and should not be interpreted as a generic prior initialization.
	
	The initial anomaly space has dimension
	$
	\dim(\mathcal S_x)=39.
	$
	Using the local linearized diagnostic introduced in
	Section~\ref{sec:darcy-enlargement}, we obtain
	\[
	\frac{\Phi_{\rm lin}^\star(\mathcal S_x)}
	{\Phi_{\rm disc}}
	=0.436,
	\]
	with data-space rank \(39\). Thus the common accessible space is locally rich
	enough to contain a discrepancy-compatible correction, although this
	linearized diagnostic does not imply global reachability.
	
	We compare standard EKI, the span-preserving inflated EKI baseline introduced
	above, and four second-order variants: inertia only
	\((\alpha,k)=(0,0)\), inertia with attraction
	\((\alpha,k)=(2.5,0)\), inertia with repulsion
	\((\alpha,k)=(0,k_J)\), and the full interacting model
	\((\alpha,k)=(2.5,k_J)\), where
	$
	k_J=\kappa/J,
	\
	\kappa=5.
	$
	For all second-order variants,
	\[
	\beta=1,
	\qquad
	\gamma=2,
	\qquad
	\varepsilon=1,
	\qquad
	p=1.5.
	\]
	For the variants containing repulsion,
	$
	k_JJf(0)=\kappa f(0)=5.
	$
	Hence the full model satisfies
	$
	k_JJf(0)-\alpha=2.5>0
	$
	and lies in the collapse-instability regime of
	Section~\ref{subsec:collapse-instability}. Runs that do not reach the
	discrepancy threshold are continued up to \(T=30\).
	
	Figure~\ref{fig:darcy-interaction-ablation} shows three clearly distinct
	ensemble regimes. Attraction alone produces rapid contraction: by \(T=30\) the normalized
	spread has decreased to \(4.3\times10^{-13}\), corresponding to numerical
	ensemble collapse, and the discrepancy threshold is not reached. Repulsion
	alone produces the opposite behavior, with normalized spread \(8.96\) at
	\(T=30\); the ensemble remains non-collapsed but becomes strongly
	overdispersed, and its parameter error eventually increases.
	
	The full interacting dynamics balances these two effects. It reaches the discrepancy threshold while retaining a non-collapsed
	ensemble.
	Attraction therefore controls the excessive dispersion generated by repulsion
	without producing the collapse observed in the attraction-only variant.
	
	The full model attains the prescribed discrepancy substantially earlier in
	integration time than the three successful alternatives. This does not
	correspond to a smaller reconstruction error: at stopping, the full model
	has relative error \(0.305\), compared with approximately \(0.29\) for the
	three alternatives. The advantage observed here should therefore be
	interpreted in terms of ensemble geometry and discrepancy attainment,
	rather than as an improvement in truth reconstruction accuracy.
	
	\begin{figure}[t]
		\centering
		\includegraphics[width=\textwidth]
		{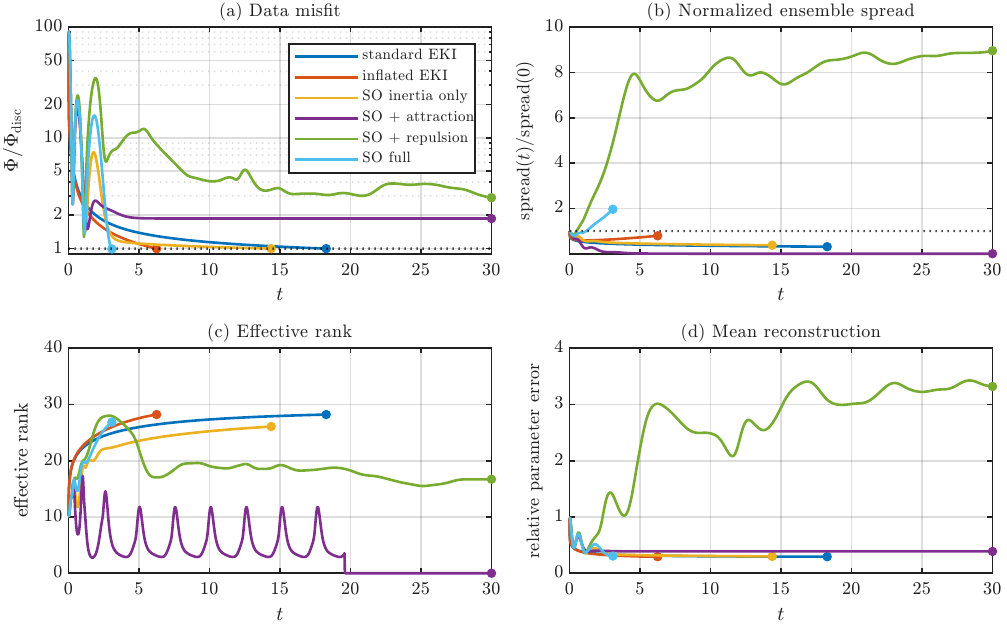}
		\caption{
			Darcy interaction-ablation experiment with \(J=40\) and
			\(V_0=0\).
			(a) Data misfit normalized by the discrepancy threshold.
			(b) Ensemble spread normalized by its initial value.
			(c) Entropy effective rank of the ensemble anomalies.
			(d) Relative reconstruction error of the ensemble mean.
			Circles indicate the first discrepancy stopping time when the threshold
			is reached and the final time \(T=30\) otherwise.
		}
		\label{fig:darcy-interaction-ablation}
	\end{figure}
	
	\begin{table}[t]
		\centering
		\caption{
			Summary of the Darcy interaction-ablation experiment.
			For successful runs, the parameter error, normalized spread, and
			effective rank are evaluated at the first discrepancy stopping time;
			for unsuccessful runs they are evaluated at \(T=30\).
			The minimum normalized misfit is computed over the full trajectory. An effective rank of zero denotes numerical ensemble collapse, according
			to the convention introduced in Section~\ref{sec:numerical-method}.
		}
		\label{tab:darcy-interaction-ablation}
		\small
		\begin{tabular}{lccccc}
			\toprule
			Method
			& Stopping time
			& \(\min_n \Phi(\bar x^n)/\Phi_{\rm disc}\)
			& Parameter error
			& \(S/S^0\)
			& \(r_{\rm eff}\)
			\\
			\midrule
			Standard EKI
			& \(18.275\)
			& \(0.9999\)
			& \(0.2938\)
			& \(0.311\)
			& \(28.21\)
			\\
			Inflated EKI
			& \(6.250\)
			& \(0.9986\)
			& \(0.2930\)
			& \(0.792\)
			& \(28.21\)
			\\
			Second-order, inertia only
			& \(14.375\)
			& \(1.0000\)
			& \(0.2987\)
			& \(0.374\)
			& \(26.07\)
			\\
			Second-order + attraction
			& not reached
			& \(1.4987\)
			& \(0.3871\)
			& \(4.31\cdot10^{-13}\)
			& \(0\)
			\\
			Second-order + repulsion
			& not reached
			& \(1.2726\)
			& \(3.3193\)
			& \(8.96\)
			& \(16.71\)
			\\
			Second-order, full
			& \(3.075\)
			& \(0.9937\)
			& \(0.3052\)
			& \(1.96\)
			& \(26.85\)
			\\
			\bottomrule
		\end{tabular}
	\end{table}
	
}

{\color{black}

	\FloatBarrier

	\subsubsection{Dependence on the ensemble size}
	\label{sec:darcy-ensemble-size}
	
	We finally investigate the dependence of the Darcy results on the ensemble
	size. We consider
	\[
	J\in\{10,20,40,80\},
	\]
	and compare standard EKI, the span-preserving inflated EKI baseline, the
	inertia-only second-order dynamics, and the full interacting model.
	For the
	latter we use the ensemble-size normalized repulsion strength introduced in
	Remark~\ref{rem:normalization:repulsion},
	$
	k=k_J:=\kappa/J,
	\
	\kappa=5.
	$
	By Remark~\ref{rem:repulsion:independent:J}, the corresponding
	collapse-instability condition is independent of \(J\).
	All remaining parameters are kept as in
	Section~\ref{sec:darcy-interaction-ablation}, and the initial velocities are
	set to zero.
	
	To isolate the effect of the ensemble size, we construct the initial
	ensembles from a common random pool. For every \(J\), the ensemble is
	recentered at the same prior mean and rescaled to have the same initial
	discrete \(L^2(D)\) spread,
	\[
	S^0=7.83\cdot10^{-2}.
	\]
	Thus increasing \(J\) adds ensemble directions without introducing a
	systematic change in either the initial mean or total spread. As in the
	preceding ablation study, this corresponds to
	\(\delta_{\rm under}=0.25\). Runs that do not reach the discrepancy threshold
	are continued up to \(T=30\).
	
	Table~\ref{tab:darcy-J-reachability} reports the corresponding
	accessible-space diagnostics. Since \(V_0=0\),
	\(\mathcal S_{xv}=\mathcal S_x\), whose dimension increases from \(9\) to
	\(79\). The rank of the linearized observation map restricted to
	\(\mathcal S_x\) increases accordingly and eventually saturates at the data
	dimension \(K=64\).
	
	\begin{table}[t]
		\centering
		\caption{
			Accessible-space diagnostics in the Darcy ensemble-size study.
			The last column gives the minimum local linearized misfit over
			\(\bar x^0+\mathcal S_x\), normalized by the discrepancy threshold.
		}
		\label{tab:darcy-J-reachability}
		\small
		\begin{tabular}{cccc}
			\toprule
			\(J\)
			& \(\dim(\mathcal S_x)\)
			& \(\dim(DG(\bar x^0)\mathcal S_x)\)
			& \(\Phi_{\rm lin}^\star(\mathcal S_x)/\Phi_{\rm disc}\)
			\\
			\midrule
			\(10\) & \(9\)  & \(9\)  & \(1.204\) \\
			\(20\) & \(19\) & \(19\) & \(0.842\) \\
			\(40\) & \(39\) & \(39\) & \(0.435\) \\
			\(80\) & \(79\) & \(64\) & \(<10^{-20}\) \\
			\bottomrule
		\end{tabular}
	\end{table}
	
	For \(J=10\), the local linearized minimum remains above the discrepancy
	threshold and none of the four methods reaches discrepancy within \(T=30\).
	This is consistent with the severe finite-ensemble restriction, although the
	linearized diagnostic is local and does not provide a global
	non-reachability result.
	
	At \(J=20\), only inflated EKI and the full interacting dynamics reach the	discrepancy, whereas for \(J=40\) and \(80\) all four methods do. Once the ensemble is sufficiently rich, the full model reaches the prescribed discrepancy earlier in integration time than the inflated baseline.
	
	Since integration time alone does not measure computational work, we also
	compare the number of forward-model evaluations in
	Figure~\ref{fig:darcy-ensemble-size}(b). For the full and inflated methods,
	respectively, the forward-solve counts are
	\[
	6070\ \text{vs.}\ 7476
	\quad (J=20),\qquad
	4962\ \text{vs.}\ 10414
	\quad (J=40),\qquad
	8830\ \text{vs.}\ 20007
	\quad (J=80).
	\]
	Thus the full dynamics requires approximately \(19\%\), \(52\%\), and
	\(56\%\) fewer forward solves than the inflated baseline at these three
	ensemble sizes.
	
	For completeness, the corresponding wall-clock measurements are also
	reported. At \(J=40\), the runtimes for standard EKI, inflated EKI, inertia
	only, and the full model are \(9.20\), \(3.12\), \(7.20\), and \(1.52\)
	seconds, respectively; at \(J=80\), they are \(12.59\), \(4.38\), \(13.37\),
	and \(2.06\) seconds. At \(J=20\), the two successful methods, inflated EKI
	and the full model, require \(3.01\) and \(2.69\) seconds. As discussed in
	Section~\ref{sec:numerical-method}, these MATLAB Online timings are
	implementation dependent and include the additional \(O(J^2)\) cost of the
	direct pairwise interaction; they are therefore used only as a complement
	to the forward-solve counts.
	
	\begin{figure}[t]
		\centering
		\includegraphics[width=\textwidth]{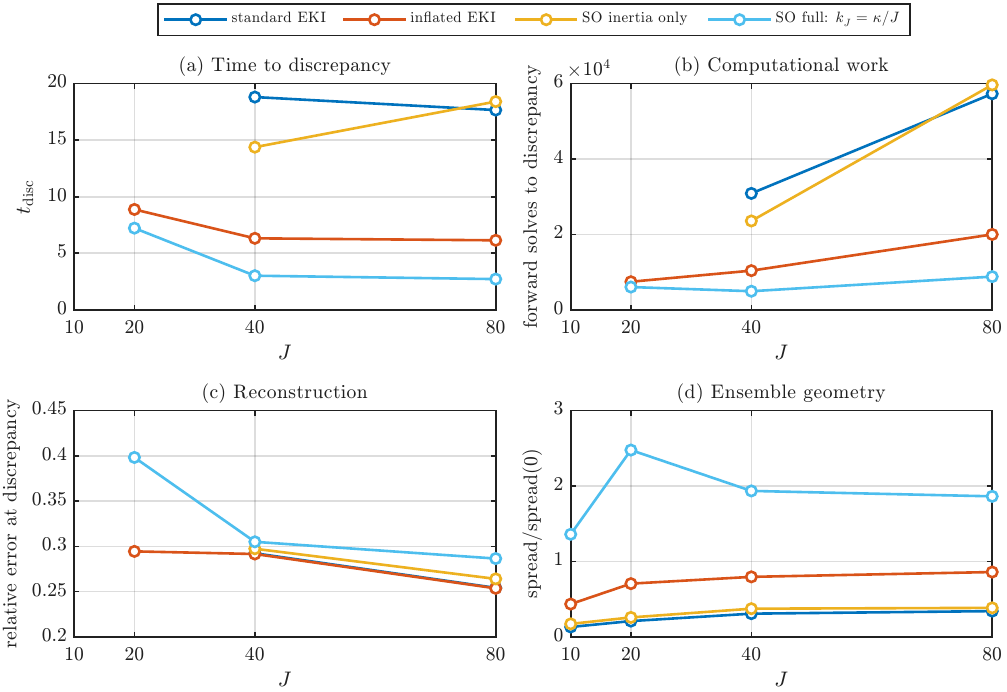}
		\caption{
			Darcy ensemble-size study for
			\(J\in\{10,20,40,80\}\).
			(a) First discrepancy stopping time.
			(b) Number of forward solves required to reach the discrepancy.
			(c) Relative parameter error at the discrepancy stopping time.
			(d) Ensemble spread \(S/S^0\), evaluated at the discrepancy stopping
			time when the threshold is reached and at \(T=30\) otherwise.
			Values in panels (a)--(c) are omitted whenever the discrepancy
			threshold is not reached.
		}
		\label{fig:darcy-ensemble-size}
	\end{figure}
	
	The reduction in computational work does not correspond to a systematic
	improvement in truth reconstruction. At each successful ensemble size, the full model has a larger
	discrepancy-stopped reconstruction error than inflated EKI; see
	Figure~\ref{fig:darcy-ensemble-size}(c).
	The advantage of the full dynamics should therefore be associated with its
	ensemble evolution and efficient discrepancy attainment, rather than with
	uniformly improved parameter reconstruction.
	
	Finally, Figure~\ref{fig:darcy-ensemble-size}(d) shows that standard EKI and
	inertia only contract substantially, whereas inflated EKI counteracts this
	contraction. The normalized full model instead maintains a comparably dispersed,
	non-collapsed ensemble across the successful values of \(J\).
	
}

\FloatBarrier
\section{Conclusions}
\label{sec:conclusions}

We introduced a second-order Ensemble Kalman particle system that augments
continuous-time EKI with inertia, damping, attraction towards the ensemble
mean, and short-range repulsion. The resulting dynamics provides a
heavy-ball-type formulation in which the internal ensemble geometry is
modified explicitly by competing attractive and repulsive mechanisms.

{\color{black}
For linear inverse problems, we established a generalized subspace property
showing that nonzero initial velocities may enlarge the accessible affine
space, while a finite-ensemble restriction remains. We also identified a
parameter regime in which fully collapsed configurations are linearly
unstable with respect to zero-mean fluctuation perturbations. Assuming
convergence to an asymptotic equilibrium, we characterized the corresponding
optimality condition on the subspace retained by the limiting covariance and,
for the associated frozen-covariance mean dynamics, proved exponential decay
along the retained directions. Numerically, we observed that the second-order
terms can improve ensemble exploration and discrepancy attainment in
high-dimensional inverse problems, while not yielding uniform gains in
reconstruction accuracy.

The proposed method is therefore not intended as a uniformly superior
replacement for first-order EKI. In practice, ensemble diagnostics such as
the data misfit, spread, and effective rank may help identify premature
contraction, although we do not propose an automatic switching or parameter
selection rule. Likewise, the usefulness of velocity-induced subspace
enlargement remains problem dependent. The present formulation is designed
for inversion and optimization rather than posterior sampling; extending the
approach to Bayesian uncertainty quantification would require a separate
design and analysis.
}

\section*{Acknowledgments}

This work was carried out within the activities of the PRIN PNRR Project 2022 No.~P2022JC95T,
``Data-driven discovery and control of multiscale interacting artificial agent systems'',
funded by MUR (Ministry of University and Research) and Next Generation EU -- European Commission.

G.V. acknowledges the support of Sapienza University under Ateneo Project 2024
``Advanced Computational Methods for Real-World Applications: Data-Driven Models, Hyperbolic Equations and Optimal Control''.

G.V. is a member of the INdAM Research National Group of Scientific Computing (INdAM--GNCS) and of the SIMAI's Activity Group on ``Multiscale Modelling of Interacting Agents''.

{\color{black} The authors thank the anonymous referees for their careful reading and	constructive comments, which helped improve the manuscript.}

\bibliographystyle{abbrv}
\bibliography{references}

\end{document}